\documentclass[11pt]{amsart}

% Language setting
% Replace `english' with e.g. `spanish' to change the document language
\usepackage[english]{babel}

% Set page size and margins
% Replace `letterpaper' with`a4paper' for UK/EU standard size
%\usepackage[letterpaper,top=2cm,bottom=2cm,left=3cm,right=3cm,marginparwidth=1.75cm]{geometry}

%%%%%%%%%%%%%%%%%% preamble %%%%%%%%%%%%%%%%%%%%%%%%%

% Useful packages
\usepackage{amsmath}
\usepackage{graphicx}
\usepackage[colorlinks=true, allcolors=blue]{hyperref}
\usepackage{amsthm}
\usepackage{amssymb}
\usepackage{fullpage}

\usepackage{booktabs,tabularx}
\usepackage{amscd}
\usepackage{subfig}
\captionsetup[subfloat]{margin=0pt, parskip=0pt, hangindent=0pt, 
indention=0pt, singlelinecheck=true,labelfont=normalfont}
\usepackage{xspace}
% tikz-related settings
\usepackage{tikz}
\usetikzlibrary{positioning,automata,arrows,shapes,matrix, arrows, decorations.pathmorphing}
\tikzset{main node/.style={circle,draw,minimum size=0.3em,inner
sep=0.5pt}}
\tikzset{minu node/.style={circle,draw,minimum size=0.3em,fill=black!100,inner
sep=0.5pt}}
\tikzset{red node/.style={circle,minimum size=0.3em,draw=red!100,inner
sep=0.5pt}}
\tikzset{state node/.style={circle,draw,minimum size=2em,fill=blue!20,inner
sep=0pt}}
\tikzset{small node/.style={circle,draw,minimum size=0.5em,inner
sep=2pt,font=\sffamily\bfseries}}

\theoremstyle{plain}
\newtheorem{theorem}{Theorem}[section]
\newtheorem{lemma}[theorem]{Lemma}
\newtheorem{proposition}[theorem]{Proposition}

\newtheorem{corollary}[theorem]{Corollary}
\theoremstyle{definition}
\newtheorem{definition}[theorem]{Definition}
\newtheorem{example}[theorem]{Example}

\theoremstyle{remark}
\newtheorem{remark}[theorem]{Remark}
\numberwithin{equation}{section}

\newcommand{\R}{{\mathbb R}}
\newcommand{\Z}{{\mathbb Z}}
\newcommand{\C}{{\mathbb C}}
\newcommand{\ra}{\rightarrow}
\newcommand{\bfa}{\mathbf{a}}

\newcommand{\fg}{{\mathfrak g}}
\newcommand{\fk}{{\mathfrak k}}

% special sets
\newcommand{\N}{\mathbb{N}}

\newcommand{\cala}{\mathcal{A}}
\newcommand{\calc}{\mathcal{C}}
\newcommand{\cald}{\mathcal{D}}

\newcommand{\bfz}{\mathbf{z}}
\newcommand{\bfx}{\mathbf{x}}
\newcommand{\bfy}{\mathbf{y}}
\newcommand{\akp}{\mathcal{A}_k(P)}
\newcommand{\lxp}{L(X,\omega_p)}
\newcommand{\hxp}{H(X,\omega_p)}
\newcommand{\pxp}{P(X,\omega_p)}
\newcommand{\dkab}{\mathcal{D}_k([a]\times[b])}
\newcommand{\res}{L\!\downarrow_\fk}
\newcommand{\se}{\subseteq}

\newcommand{\inverse}{^{-1}}

\newcommand{\abs}[1]{\lvert #1 \rvert}

% math operators
\DeclareMathOperator{\width}{\mathrm{width}}

\title{Branching rules of minuscule representations via a new partial order}

\author{R.M. Green}
\address{Department of Mathematics, University of Colorado Boulder, Campus Box
395, Boulder, Colorado, USA, 80309}
\email{rmg@colorado.edu}

\author{Tianyuan Xu}
\address{Department of Mathematics and Statistics, University of Richmond,
Richmond, Virginia, USA, 23173}
\email{tianyuan.xu@richmond.edu}

\keywords{antichain, distributive lattice, minuscule representation, branching
rule}
\subjclass{Primary: 06A07; Secondary: 05E10, 06A11}

\begin{document}
\maketitle

\begin{abstract}
  We introduce a new partial order on the set of all antichains of a fixed size in
any poset.  When applied to minuscule posets, these partial orders give rise to
distributive lattices that appear in the branching rules for minuscule
representations of complex simple Lie algebras.
\end{abstract}

%%%%%%%%%%%%%%%%%%%%%%%%%%%%%%%%%%%%%%%%%%%%%%%%%%%
%%%%%%%%%%%%%%%%%%%%%%%%%%%%%%%%%%%%%%%%%%%%%%%%%%%

\section{Introduction}

This paper introduces a new partial order $\le_k$ on the set $\cala_k(P)$ of all
antichains of a fixed size $k$ in any poset $P$. We assume basic familiarity
with poset theory, including the notions of antichains, order ideals, order
filters, covering relations, Hasse diagrams, products of posets, and
distributive lattices.  These notions can all be found in \cite[\S 3]{EC1},
whose definitions and notations we will follow. In particular, we write $a
\lessdot_P b$  to indicate two elements $a,b$ are in a covering relation in a
poset $P$. We denote the sets of positive integers and nonnegative integers by
$\Z_+$ and $\N$, respectively. For each $n\in \N$, we write $[n]$ for the set
$\{1,2,\cdots, n\}$, viewed as a poset with the natural order. Unless otherwise
stated, all posets in the paper will be finite.

It is well known that for a (finite) poset $P$, there is a bijection between the
set $J(P)$ of ideals of $P$ and the set $\cala(P)$ of antichains of $P$, given
by associating an ideal with its set of maximal elements. The containment order
on $J(P)$ then induces a partial order on $\cala(P)$, which we will denote by
$\le_J$, and which we may restrict to the set $\cala_k(P)$ for each $k\in \N$.
It is a classic result of Dilworth \cite{dilworth60} that when $k$ is the
\emph{width} of $P$, defined as $\width(P)=\max\{\abs{A}:A\in \cala(P)\}$, the
set $\akp$ is a distributive lattice under the restriction of $\le_J$.

The new partial order $\le_k$ we introduce on $\akp$ is defined as the reflexive
transitive extension of the relation $\prec_k$, where we declare $A\prec_k B$
for $A,B\in \akp$ if $B=A\setminus\{a\}\cup\{b\}$ for elements $a,b\in P$ such
that $a\lessdot_P b$.  As we show in Section \ref{sec:new_order}, the order
$\le_k$ is coarser than the restriction of $\le_J$ in general, and $\akp$ might
not be a distributive lattice under the order $\le_k$ when $k=\width(P)$.

The order $\le_k$ has striking properties when applied to minuscule posets in
the sense of Proctor \cite{proctor84}. Recall that a finite dimensional
representation of a simple Lie algebra is called a minuscule representation if
the Weyl group of the Lie algebra acts transitively on the weights of the
representation. A minuscule poset is a poset $P$ for which the poset $J(P)$ is
isomorphic to the weight poset of a minuscule representation. Both minuscule
representations and minuscule posets have well-known and explicit
classifications. In particular, the minuscule posets are the posets
of the forms $[a]\times [b]$, $J([n]\times [2]), J^m([2]\times [2]),
J^2([2]\times [3])$, and $J^3([2]\times [3])$ where $a,b,n\in \Z_+$ and $m\in
\N$. The Hasse diagrams of these posets are shown in Figure \ref{fig:minuscule}. 

In Theorem \ref{thm:dist}, we determine the posets of form $\akp$ for all
minuscule posets $P$, and we show that they are distributive lattices in all
cases.  This is notable since, as mentioned earlier, $\akp$ does not have to be
a distributive lattice for a general poset $P$. More remarkably, the sets $\akp$
for minuscule posets $P$ retain intimate connections to minuscule
representations: it turns out that the order $\le_k$ can be used to determine
the branching rules of all minuscule representations up to diagram automorphisms
of Lie algebras; see Theorem \ref{thm:repthy} and Remark \ref{rmk:automorphism}.
For example, for the minuscule poset $P = [a] \times [b]$, the poset $J(P)$ is
isomorphic to the weight poset of a suitable minuscule representation $V$ for a
simple Lie algebra of type $A$, and $\akp$ controls the branching of $V$. In
this case, the poset $\cala_k(P)$ is naturally isomorphic to the poset $\dkab$
of Young diagrams of Durfee length $k$ that fit into an $a \times b$ box; see
Definition \ref{def:Durfee} and Corollary \ref{cor:Durfee}. 

\begin{figure}[h!]
\centering
\subfloat[{$[a]\times [b]$.}]
{
\begin{tikzpicture} 
\node[main node] (0) {}; 
\node[main node] (1) [above left=0.4cm and 0.4cm of 0] {};
\node[main node] (2) [above left=0.4cm and 0.4cm of 1] {};
\node[main node] (3) [above left=0.4cm and 0.4cm of 2] {};

\node[main node] (4) [above right=0.4cm and 0.4cm of 0] {};
\node[main node] (5) [above right=0.4cm and 0.4cm of 4] {};
\node[main node] (6) [above right=0.4cm and 0.4cm of 5] {};
\node[main node] (7) [above right=0.4cm and 0.4cm of 6] {};

\node[main node] (40) [above right=0.4cm and 0.4cm of 1] {};
\node[main node] (50) [above right=0.4cm and 0.4cm of 40] {};
\node[main node] (60) [above right=0.4cm and 0.4cm of 50] {};
\node[main node] (70) [above right=0.4cm and 0.4cm of 60] {};

\node[main node] (400) [above right=0.4cm and 0.4cm of 2] {};
\node[main node] (500) [above right=0.4cm and 0.4cm of 400] {};
\node[main node] (600) [above right=0.4cm and 0.4cm of 500] {};
\node[main node] (700) [above right=0.4cm and 0.4cm of 600] {};

\node[main node] (4000) [above right=0.4cm and 0.4cm of 3] {};
\node[main node] (5000) [above right=0.4cm and 0.4cm of 4000] {};
\node[main node] (6000) [above right=0.4cm and 0.4cm of 5000] {};
\node[main node] (7000) [above right=0.4cm and 0.4cm of 6000] {};

\path[draw]
(0)--(1)
(2)--(3)
(3)--(4000)--(5000)
(2)--(400)--(500)
(1)--(40)--(50)
(0)--(4)--(5)
(6)--(7)
(60)--(70)
(600)--(700)
(6000)--(7000)
(4)--(40)
(5)--(50)
(6)--(60)
(7)--(70)
(400)--(4000)
(500)--(5000)
(600)--(6000)
(700)--(7000);

\path[draw,dashed]
(1)--(2)
(40)--(400)
(50)--(500)
(60)--(600)
(70)--(700)
(5)--(6)
(50)--(60)
(500)--(600)
(5000)--(6000);
\end{tikzpicture}
}
\qquad\qquad\qquad\qquad
\subfloat[{$J([n]\times [2])$}]{
\begin{tikzpicture} 
\node[main node] (0) {}; 
\node[main node] (1) [above right=0.4cm and 0.4cm of 0] {};
\node[main node] (2) [above right=0.4cm and 0.4cm of 1] {};
\node[main node] (3) [above right=0.4cm and 0.4cm of 2] {};
\node[main node] (4) [above right=0.4cm and 0.4cm of 3] {};

\node[main node] (10) [above left=0.4cm and 0.4cm of 1] {};
\node[main node] (20) [above right=0.4cm and 0.4cm of 10] {};
\node[main node] (30) [above right=0.4cm and 0.4cm of 20] {};
\node[main node] (40) [above right=0.4cm and 0.4cm of 30] {};

\node[main node] (100) [above left=0.4cm and 0.4cm of 20] {};
\node[main node] (200) [above right=0.4cm and 0.4cm of 100] {};
\node[main node] (300) [above right=0.4cm and 0.4cm of 200] {};

\node[main node] (1000) [above left=0.4cm and 0.4cm of 200] {};
\node[main node] (2000) [above right=0.4cm and 0.4cm of 1000] {};

\node[main node] (5) [above left=0.4cm and 0.4cm of 2000] {};

\path[draw]
(0)--(1)--(2)
(3)--(4)--(40)
(10)--(20)
(30)--(40)
(200)--(300)
(200)--(1000)--(2000)--(300)
(2000)--(5)
(10)--(1)
(20)--(2)
(30)--(3);

\path[draw,dashed]
(2)--(3)
(20)--(30)
(20)--(100)
(30)--(200)
(40)--(300)
(100)--(200);
\end{tikzpicture}
}\\
\subfloat[{$J^m([2]\times [2])$.}]
{
\begin{tikzpicture} 
\node[main node] (0) {}; 
\node[main node] (1) [above right=0.4cm and 0.4cm of 0] {};
\node[main node] (2) [above right=0.4cm and 0.4cm of 1] {};
\node[main node] (3) [above right=0.4cm and 0.4cm of 2] {};
\node[main node] (4) [above right=0.4cm and 0.4cm of 3] {};

\node[main node] (00) [above left=0.4cm and 0.4cm of 3] {};
\node[main node] (10) [above right=0.4cm and 0.4cm of 00] {};
\node[main node] (20) [above right=0.4cm and 0.4cm of 10] {};
\node[main node] (30) [above right=0.4cm and 0.4cm of 20] {};
\node[main node] (40) [above right=0.4cm and 0.4cm of 30] {};

\path[draw]
(0)--(1)
(2)--(3)--(4)--(10)--(00)--(3)
(10)--(20)
(30)--(40);

\path[draw,dashed]
(1)--(2)
(20)--(30);
\end{tikzpicture}
}
\qquad\qquad
\subfloat[{$J^2([2]\times [3])$.}]
{
\begin{tikzpicture} 
\node[main node] (0) {}; 
\node[main node] (1) [above right=0.4cm and 0.4cm of 0] {};
\node[main node] (2) [above right=0.4cm and 0.4cm of 1] {};
\node[main node] (3) [above right=0.4cm and 0.4cm of 2] {};
\node[main node] (4) [above right=0.4cm and 0.4cm of 3] {};

\node[main node] (00) [above left=0.4cm and 0.4cm of 2] {};
\node[main node] (10) [above right=0.4cm and 0.4cm of 00] {};
\node[main node] (20) [above right=0.4cm and 0.4cm of 10] {};

\node[main node] (01) [above left=0.4cm and 0.4cm of 10] {};
\node[main node] (02) [above right=0.4cm and 0.4cm of 01] {};
\node[main node] (03) [above right=0.4cm and 0.4cm of 02] {};

\node[main node] (11) [above left=0.4cm and 0.4cm of 01] {};
\node[main node] (12) [above right=0.4cm and 0.4cm of 11] {};
\node[main node] (13) [above right=0.4cm and 0.4cm of 12] {};
\node[main node] (14) [above right=0.4cm and 0.4cm of 13] {};
\node[main node] (15) [above right=0.4cm and 0.4cm of 14] {};

\path[draw]
(0)--(1)--(2)--(3)--(4)
(00)--(10)--(20)
(01)--(02)--(03)
(11)--(12)--(13)--(14)--(15)
(2)--(00)
(3)--(10)--(01)--(11)
(4)--(20)--(02)--(12)
(03)--(13);
\end{tikzpicture}
}
\qquad\qquad
\subfloat[{$J^3([2]\times [3])$.}]
{
\begin{tikzpicture} 
\node[main node] (0) {}; 
\node[main node] (1) [above right=0.4cm and 0.4cm of 0] {};
\node[main node] (2) [above right=0.4cm and 0.4cm of 1] {};
\node[main node] (3) [above right=0.4cm and 0.4cm of 2] {};
\node[main node] (4) [above right=0.4cm and 0.4cm of 3] {};
\node[main node] (5) [above right=0.4cm and 0.4cm of 4] {};

\node[main node] (00) [above left=0.4cm and 0.4cm of 3] {};
\node[main node] (10) [above right=0.4cm and 0.4cm of 00] {};
\node[main node] (20) [above right=0.4cm and 0.4cm of 10] {};

\node[main node] (01) [above left=0.4cm and 0.4cm of 10] {};
\node[main node] (02) [above right=0.4cm and 0.4cm of 01] {};
\node[main node] (03) [above right=0.4cm and 0.4cm of 02] {};

\node[main node] (11) [above left=0.4cm and 0.4cm of 01] {};
\node[main node] (12) [above right=0.4cm and 0.4cm of 11] {};
\node[main node] (13) [above right=0.4cm and 0.4cm of 12] {};
\node[main node] (14) [above right=0.4cm and 0.4cm of 13] {};
\node[main node] (15) [above right=0.4cm and 0.4cm of 14] {};

\node[main node] (21) [above left=0.4cm and 0.4cm of 11] {};
\node[main node] (22) [above right=0.4cm and 0.4cm of 21] {};
\node[main node] (23) [above right=0.4cm and 0.4cm of 22] {};
\node[main node] (24) [above right=0.4cm and 0.4cm of 23] {};
\node[main node] (25) [above right=0.4cm and 0.4cm of 24] {};

\node[main node] (31) [above left=0.4cm and 0.4cm of 24] {};
\node[main node] (32) [above right=0.4cm and 0.4cm of 31] {};
\node[main node] (33) [above right=0.4cm and 0.4cm of 32] {};
\node[main node] (34) [above right=0.4cm and 0.4cm of 33] {};
\node[main node] (35) [above right=0.4cm and 0.4cm of 34] {};

\path[draw]
(0)--(1)--(2)--(3)--(4)--(5)
(00)--(10)--(20)
(01)--(02)--(03)
(11)--(12)--(13)--(14)--(15)
(21)--(22)--(23)--(24)--(25)
(31)--(32)--(33)--(34)--(35)
(3)--(00)
(4)--(10)--(01)--(11)--(21)
(5)--(20)--(02)--(12)--(22)
(03)--(13)--(23)
(14)--(24)--(31)
(15)--(25)--(32);
\end{tikzpicture}
}
\caption{Minuscule posets.}
\label{fig:minuscule}
\end{figure}

Our motivation for studying the partial order $\le_k$ comes from our earlier work 
\cite{gx2} on Kazhdan--Lusztig cells of $\bfa$-value 2, where $\bfa$ is Lusztig's 
$\bfa$-function. For $\bfa(2)$-finite Coxeter groups (as defined and described 
in \cite{gx2}), every element $w$ of $\bfa$-value 2 has an associated
heap poset $H$. A key property of $H$ can be summarized as follows: the poset
$\cala_2(H)$ has a minimum element with respect to $\le_2$, and the ideal
generated by the minimal element determines the left cell of $w$. A similar
statement holds for maximal elements and right cells.

The rest of the paper is organized as follows. We introduce the order $\le_k$
and study its basic properties in Section \ref{sec:new_order}.  We introduce a
family of posets we call binomial posets in Section \ref{sec:sequence_posets}; 
the section does not treat the order $\le_k$ directly, but the binomial posets
will provide a useful model for the subsequent parts of the paper.  Sections
\ref{sec:type_A} and \ref{sec:other_types} study the poset $\cala_k(P)$ for
minuscule posets of type $A$ and of all other types, respectively, culminating
in the explicit descriptions of all such posets $\akp$ in Theorem
\ref{thm:dist}. Section \ref{sec:repthy} explains how the order $\le_k$ relates
to branching rules of minuscule representations of simple Lie algebras when
applied to minuscule posets. Finally, we discuss several open questions related
to the order $\le_k$ in Section \ref{sec:conclude}.

\section{A new partial order on antichains}
\label{sec:new_order}
Throughout this section, let $P$ be a poset, let $k\in \N$, and let $\cala_k(P)$
be the set of antichains of $P$ of cardinality $k$.  Recall from the
introduction that the containment order on ideals of $P$ induces an order
$\le_J$ on the antichains of $P$. The order $\leq_J$ can be
described without reference to ideals as follows: for any $A,B\in \cala(P)$, we
have  $$ A \leq_J B \quad \Longleftrightarrow \quad \forall\, a \in A, \exists\,
b \in B : a \leq_P b.$$ In this section, we will show that the partial order
$\le_k$ defined in the
introduction is indeed a partial order, study its basic
properties, and discuss its relationship with $\le_J$.
Recall that $\le_k$ is defined as follows. 

\begin{definition}\label{def:keydef}
For any $A,B\in \akp$, we write $A \prec_k B$ if $A \backslash B = \{a\}$ and $B
\backslash A = \{b\}$ are singleton sets with the property that $a <_P b$. We
define $\leq_k$ to be the reflexive transitive extension of $\prec_k$ on
$\cala_k(P)$.
\end{definition}

\begin{lemma}\label{lemma:new_poset}
The relation $\leq_k$ of Definition \ref{def:keydef} is a partial order on the
set $\cala_k(P)$, and the restriction of the partial order $\leq_J$ to
$\cala_k(P)$ refines $\leq_k$. \end{lemma}

\begin{proof}
The definition of $\prec_k$ implies that if $A, B \in \cala_k(P)$ satisfy $A \prec_k B$,
then we have $A \leq_J B$.  The antisymmetry of $\leq_J$ then implies the
antisymmetry of $\leq_k$, and it follows in turn that $\leq_k$ is a partial
order refined by the restriction of $\leq_J$ to $\akp$. 
\end{proof}

\begin{remark}
    \label{rmk:immediate} It is immediate from Definition \ref{def:keydef} that
      $\akp$ is nonempty if and only if $0\le k\le \width(P)$, that $\cala_0(P)$
      is the singleton poset, and that $\cala_1(P)$ is canonically isomorphic to
      $P$ itself.
  \end{remark}

\begin{proposition}\label{prop:basics}
Let $A,B\in \akp$. 
\begin{itemize} 
\item [(i)]  If $A \leq_k B$, then the elements of $A = \{a_1, a_2, \cdots,
     a_k\}$ and $B = \{b_1, b_2, \cdots, b_k\}$ can be ordered in such a way
     that $a_i \leq_P b_i$ for all $1 \leq i \leq k$.
 \item [(ii)] The elements $A$ and $B$ are in a covering relation
      $A\lessdot_{\cala_k(P)} B$ if and only if we have both (1) $A \prec_k B$
      and (2) the unique elements $a \in A \backslash B$ and $b \in B \backslash
      A$ satisfy $a \lessdot_P b$.
\end{itemize}
\end{proposition}

\begin{proof}
If $A \prec_k B$, then (i) follows by the definition of $\prec_k$; the general
case follows by induction.

To prove (ii), assume first that $A \lessdot_{\cala_k(P)} B$. By the definition
of $\leq_k$, we must have $A \prec_k B$. We then have $A = C \cup \{a\}$ and $B
= C \cup \{b\}$, where $C = A \cap B \in \cala_{k-1}(P)$ and $a, b \in P$
satisfy $a <_P b$. Suppose that some element $x \in P$ satisfies $a <_P x <_P
b$. Then the set $C' := C \cup \{x\}$ must be an antichain in  $\cala_k(P)$ for
the following reason: we cannot have $x \le_P c$ for any $c \in C$ because $a
<_P x \le_P c$ and $A$ is an antichain, and we cannot have $c \le_P x$ for any
$c \in C$ because $c \le_P x <_P b$ and $B$ is an antichain. It follows that $A
<_k C' <_k B$, which is a contradiction, so we have $a \lessdot_P b$.

Conversely, assume that $A, B \in \cala_k(P)$ satisfy $A \prec_k B$, and also
that $a \lessdot_P b$, where $C = A \cap B = \{c_1, c_2, \cdots, c_{k-1}\}$, $A
= C \cup \{a\}$, and $B = C \cup \{b\}$.  Write $A = \{a_1, a_2, \cdots, a_k\}$,
where $a_i = c_i$ for $i < k$ and $a_k = a$, and $B = \{b_1, b_2, \cdots,
b_k\}$, where $b_i = c_i$ for $i < k$ and $b_k = b$.  Suppose for a
contradiction that there exists $X = \{x_1, x_2, \cdots, x_k\} \in \cala_k(P)$
such that $A <_k X$ and $X <_k B$.  It follows from (i) that there are
permutations $\sigma$ and $\tau$ of $\{1, 2, \cdots, k\}$ such that for all $1
\leq i \leq k$, we have $a_i \leq_P x_{\sigma(i)}$ and $x_i \leq_P b_{\tau(i)}$,
which implies that $a_i \leq_P b_{\tau(\sigma(i))}$.  Because $A$ and $B$ are
antichains, we must have $\tau(\sigma(i)) = i$ for all $1 \leq i < k$, and this
implies that $\tau = \sigma^{-1}$. By relabelling $X$ if necessary, we may
assume that $\sigma$ and $\tau$ are both the identity permutation, and that $X =
C \cup \{x_k\}$, where $a = a_k <_P x_k <_P b_k = b$. This contradicts the
hypothesis that $a \lessdot_P b$, and (ii) follows.
\end{proof}

To compare $\le_k$ with $\le_J$, we first note that the order $\le_k$ may be
strictly coarser than the restriction of the order $\le_J$ to $A_k(P)$. For
example, consider the poset $P = \{a, b, c, d, e\}$  with covering relations $a
\lessdot_P c$, $b \lessdot_P c$, $c \lessdot_P d$, and $c \lessdot_P e$.  Then
$\width(P)=2$, and the set $\cala_2(P)$ consists of the two antichains, $\{a,
b\}$ and $\{d, e\}$. These antichains are comparable in the partial order
$\leq_J$, but not in the order $\le_k$ by Proposition \ref{prop:basics} (ii).
This shows that $\leq_2$ strictly coarsens $\leq_J$. The same example also shows
that the converse of Proposition \ref{prop:basics} (i) does not hold. On the
other hand, we note that for some important classes of examples, the partial
orders $\leq_J$ and $\leq_k$ on the maximal antichains of $P$ are identical.
Examples of such posets include the heaps of fully commutative elements (in the
sense of \cite{FC}) in finite Coxeter groups, and more generally in star
reducible Coxeter groups (in the sense of \cite{star_reducible}).

Dilworth \cite[Theorem 2.1]{dilworth60} proved that  $\cala_k(P)$ is a
distributive lattice under the partial order $\leq_J$ for $k=\width(P)$. The
poset $P$ from the previous paragraph proves that this is not the case for the
order $\leq_k$, since $\cala_2(P)=\cala_{\width(P)}(P)$ has no maximum or
minimum element under $\le_k$. Remarkably, however, all the nonempty posets
$\cala_k(P)$ are distributive lattices under the order $\leq_k$ if $P$ is a
minuscule poset, as we will prove in Theorem \ref{thm:dist}. Note that by Remark
\ref{rmk:immediate}, for $\cala_k(P)$ to be a distributive lattice for all
values of $k$, the poset $P$ must be a distributive lattice itself, but this is
not a sufficient condition: if $P=\{a,b,c\}$ is an antichain with three
elements, then $J(P)$ is a distributive lattice, but $\cala_2(J(P))$ is not
because it has no maximum or minimum element.

\section{Binomial posets}
\label{sec:sequence_posets}
In this section we study a family of posets, which we call binomial posets, that
will be used extensively in the descriptions of posets of the form $\akp$. We
show that binomial posets naturally parameterize posets of the form $J([a]\times
[b])$ (Proposition \ref{prop:jab_iso}) and a poset $\cald_k([a]\times[b])$ that
arises in the context of Young diagrams (Proposition \ref{prop:Durfee}).

\begin{definition}\label{def:cak}
For any $k,n\in \N$ such that $k \leq n$, we define $\calc(n,k)$ to be the poset
consisting of strictly increasing length-$k$ sequences with entries from $[n]$,
ordered by coordinatewise comparison: for sequences $\bfx = (x_1, x_2, \cdots,
x_k)$ and $\bfy = (y_1, y_2, \cdots, y_k)$ in $\calc(n, k)$, we define
$\bfx\le_{\calc(n,k)} \bfy$ if $x_i\le y_i$ for all $1\le i\le k$.  We
abbreviate the notation $\le_{\calc(n,k)}$ to $\le_\calc$ and
$\lessdot_{\calc(n,k)}$ to $\lessdot_\calc$.
\end{definition}

The fact that $\calc(n,k)$ is a poset is immediate from the above definition.
The name ``binomial poset'' is motivated by the fact that we may naturally
identify $\calc(n,k)$ with the set of all size-$k$ subsets of $[n]$, by
identifying each sequence in $\calc(n,k)$ with its set of entries.

%Indeed, the order $\le_\calc$ is a \emph{Gale order} in the sense of
%\cite{Gale} and \cite[Definition 6]{vince2000greedy}: if $A$ and $B$ are
%$I$-tuples of a subset $X$, then we have $A \leq B$ in the Gale order if there
%exists a bijection $f_i : A_i \rightarrow B_i$ such that $a \leq f_i(a)$ for
%any $a \in A_i$.

\begin{lemma}\label{lem:cak} 
Let $k,n$ be nonnegative integers such that $k \leq n$. 
\begin{itemize}
\item[{\rm (i)}] 
The function $\rho : \calc(n, k) \rightarrow \N$ defined by 
\[
    \rho( (x_1,\cdots,x_k)) = x_1 + x_2 + \cdots + x_k
\]
is a rank function on $\calc(n,k)$.  In other words, if $\bfx, \bfy \in \calc(n,
k)$ satisfy $\bfx \leq_\calc \bfy$, then we have $\rho(\bfx) \leq \rho(\bfy)$,
with $\rho(\bfy) = \rho(\bfx) + 1$ if and only if $\bfx \lessdot_{\calc} \bfy$.

\item[{\rm (ii)}]
Two elements $\bfx=(x_1,\cdots,x_k)$ and $\bfy=(y_1,\cdots, y_k)$  in $\calc(n,
k)$  are in a covering relation $\bfx\lessdot_\calc \bfy$ if and only if there
exists $i\in [k]$ such that $y_i=x_i+1$ and $y_j=x_j$ for all $j\in
[k]\setminus\{i\}$.
\end{itemize}
\end{lemma}

\begin{proof}
The definition of $\leq_\calc$ implies that  $\rho(\bfx) < \rho(\bfy)$ whenever
$\bfx <_\calc \bfy$. It follows that if $\bfx <_\calc \bfy$ and $\rho(\bfy) =
\rho(\bfx) + 1$, then $\bfx \lessdot_\calc \bfy$.  To prove the converse,
suppose that $\bfx <_\calc \bfy$ for elements $\bfx =(x_1, x_2, \cdots, x_k)$
and $\bfy = (y_1, y_2, \cdots, y_k)$ in $\calc(n,k)$. Choose $1 \leq r \leq k$
to be the maximal index satisfying $x_r < y_r$, and let $\bfz = (\bfx \backslash
\{x_r\}) \cup \{x_r + 1\}$.  Note that the set $\bfz$ still consists of $k$
distinct numbers: if $r = k$ then $\bfz$ is obtained from $\bfx$ by increasing
the largest entry in $\bfx$, while if $r < k$ then we have $x_r + 1 \leq y_r <
y_{r+1}=x_{r+1}$.  Note also that regardless of whether $r = k$, we have $x
\leq_\calc z \leq_\calc y$ and $\rho(\bfz) = \rho(\bfx) + 1$. It follows that if
$\bfx \lessdot_\calc \bfy$ then we must have $\bfz = \bfy$ and $\rho(\bfy) =
\rho(\bfx) + 1$. This completes the proof of (i), and (ii) follows immediately.
\end{proof}

We now discuss two applications of binomial posets. The first application
concerns ideals of $[a]\times [b]$, and is illustrated in Example \ref{eg:seq}.
We think of $[a]\times [b]$ as embedded in the lattice $\Z_+^2 :=\{(x,y): x,y\in
\Z_+\}$, where a point $(i_1, j_1)$ is smaller than another point $(i_2,j_2)$ if
and only if $(i_1,j_1)$ lies weakly to the southwest of $(i_2,j_2)$. It follows that if $I$ is an ideal in $[a]\times [b]$ and
we define $m_j$ to the maximal integer $i\in [a]$ such that $(i,j)\in I$ for
each $j\in [b]$, then each $m_j$ records the number of elements in $I$ in Row
$j$ (i.e., in the set $\{(k,j):k\in \Z_+\}$), and the sequence 
\[ 
    \bfx_I=(m_b, \cdots, m_1) 
\] 
is a weakly increasing sequence such that $0\le m_b \le m_{b-1}\le \cdots\le
m_1\le a$.  The sequence 
\[ \bfx'_I=(m_b+1, m_{b-1}+2, \cdots, m_1+b) \] 
is then an element of the binomial poset $\calc(a+b,b)$.  Furthermore, it is
routine to verify that conversely every sequence in $\calc(a+b,b)$ has the form
$\bfx'_I$ for a unique ideal $I$ of $[a]\times [b]$, and that the map $f:
J([a]\times[b])\ra \calc(a+b,a), I\mapsto \bfx'_I$ satisfies the condition that
$I_1\se I_2$ if and only if $f(I_1)\le_\calc f(I_2)$ for all $I_1, I_2\in
J([a]\times[b])$. We have thus proved the following result (which holds
trivially if either $a$ or $b$ is zero).

\begin{proposition}\label{prop:jab_iso}
For any $a,b\in \N$, the posets  $J([a] \times [b])$ and $\calc(a+b, b)$ are
isomorphic.\qed 
\end{proposition}

\begin{example}
    \label{eg:seq}
If
$a=6, b=5,$ and $I$ consists of the filled vertices in the grid $[a]\times[b]$
shown in Figure \ref{fig:seq}, then
we have $x_I=(0,1,4,6,6)$ and $x'_I=(1,3,7,10,11)\in \calc(11,5)$.

\begin{figure}[h!]
            \centering
        \begin{tikzpicture}

% lattice points
\foreach \x in {0,...,5}
\foreach \y in {0,...,4}{
    %\node[dot] at (\x,5){ };
    \node[main node] at (\x,\y){ };
}

% points in the ideal
\foreach \x in {0,...,3}
\foreach \y in {0,...,2}{
    %\node[dot] at (\x,5){ };
    \node[minu node] at (\x,\y){ };
}

\foreach \x in {4,5}
\foreach \y in {0,1}{
    %\node[dot] at (\x,5){ };
    \node[minu node] at (\x,\y){ };
}

\node[minu node] at (0,3) {};

\foreach \x in {0,...,4}
\foreach \y in {0,...,3}{
    \draw (\x,\y+0.05) --(\x, \y+0.95);
    \draw (\x+0.05,\y) --(\x+0.95, \y);
    \draw (\x+0.05,4) --(\x+0.95, 4);
    \draw (5,\y+0.05) --(5, \y+0.95);
}

% boundary coordinates
\foreach \x in {2,...,6}
\node[below,yshift=-1mm] at (\x-1,0) {\x};
\node[below,xshift=-3mm,yshift=-1mm] at (0,0) {1};

\foreach \y in {2,...,5}
\node[xshift=-3mm] at (0,\y-1) {\y};
\end{tikzpicture}
\caption{An ideal in the poset $[6]\times [5]\subset \Z_+^2$.}
\label{fig:seq}
\end{figure}
\end{example}

\begin{remark}
\label{rmk:symmetry}
\begin{itemize}
\item[{\rm (i)}]
Since $J(P)$ is a distributive lattice for any finite poset $P$, Proposition
\ref{prop:jab_iso} implies that binomial posets are distributive
lattices.
\item[\rm (ii)] 
Given two posets $P$ and $Q$, the map $P\times Q\ra Q\times P, (p,q)\mapsto
(q,p)$ is clearly a poset isomorphism, so Proposition \ref{prop:jab_iso} also
implies that $\calc(a+b,b)\cong J([a]\times [b])\cong J([b]\times[a]) \cong
\calc(a+b,a)$ as posets for all $a,b\in \N$. 
\end{itemize}
\end{remark}

%We recall the fact that given two posets $P$, $Q$ and two elements $(p,q)$ and
%$(p',q')$ in the product poset $P\times Q$, we have $(p,q)\lessdot_{P\times Q}
%(p',q')$ if and only if we have $p=p, q\lessdot_Q q$ or have $p\lessdot_P p',
%q=q'$. We will use this fact without further comment in the rest of the paper.

The second application of binomial posets concerns Young diagrams or,
equivalently, Ferrers diagrams (see \cite[\S 1.7]{EC1}). Using the French
notation, we may conveniently view Ferrers diagrams to be finite ideals of the
infinite poset $\Z_+^2$.  Our goal is to use binomial posets to parameterize the
posets $\cald_k(a,b)$ defined below. 

\begin{definition}
\label{def:Durfee}
\begin{itemize}
\item[{\rm (i)}]
We define the \emph{Durfee length} of a Ferrers diagram $D$ to be the largest
integer $k\in \N$ such that $[k]\times[k]\se D$, i.e., the side length of the
largest square grid $S$ that fits inside $D$, and we call $S$ the \emph{Durfee
square} of $D$. 
\item[{\rm (ii)}]
For any $a,b,k\in \N$ with $k\le \min(a,b)$, we define $\cald_k(a,b)$ to be the
poset of all Ferrers diagrams with Durfee length $k$ contained in the set
$[a]\times[b]$, ordered by set containment. 
\end{itemize}
\end{definition}

Given any Ferrers diagram $D$ in the poset $\cald_k([a]\times[b])$, we may
naturally decompose $D$ into three parts: the Durfee square $S=[k]\times[k]$ of
$D$; the part $I_1=\{(i,j)\in D: j>k\}$  above $S$; and the part $I_2=\{(i,j)\in
D: i>k\}$ to the right of $S$. Shifting $I_1$ down and $I_2$ to the left by $k$,
we obtain ideals $I'_1=\{(i,j-k):(i,j)\in I_1\}$ and $I'_2=\{(i-k,j):(i,j)\in
I_2\}$ in the grids $[k]\times [b-k]$ and $[a-k]\times [k]$, respectively; see
Example \ref{eg:ferrers}.  It follows that we have a map \[\varphi: \dkab\ra
J([k]\times [b-k])\times J([a-k]\times [k]), \quad D\mapsto (I'_1,I'_2).\]
Furthermore, the map $\varphi$ is invertible, with the inverse $\varphi\inverse$
being the map that stacks $I'_1$ and $I'_2$ on top and to the right of the
Durfee square.  Both $\varphi$ and $\varphi\inverse$ clearly respect the orders
in $\cald_k([a]\times[b])$ and $\calc(a,k)\times\calc(b,k)$, so we have proved
the following:

\begin{proposition}
\label{prop:Durfee}
If $a,b,k\in \N$ satisfy $k\le \min(a,b)$, then the posets $\dkab$ and $
\calc(a,k)\times \calc(b,k)$ are isomorphic.\qed
\end{proposition}

\begin{example}
    \label{eg:ferrers}
If we view the ideal $I$ from Example \ref{eg:seq} as a Ferrers diagram
contained in the grid $[a]\times[b]$ with $a=6$ and $b=5$, then the Durfee length
of the Ferrers diagram is $k=3$, and the Durfee square is the
$[k]\times[k]=[3]\times[3]$ square in the bottom left corner. The natural
decomposition of the Ferrers diagram is illustrated in Figure
\ref{fig:ferrers}: the set $I_1$ in the decomposition consists of the
unique element in $I$ above the Durfee square, which forms an ideal of the
$[k]\times[b-k]=[3]\times[2]$ grid above $S$, and $I_2$ consists of the 7
elements in $I$ to the right of $S$, which form an ideal of the
$[a-k]\times[k]=[3]\times[3]$ grid to the right of $S$.

\begin{figure}[h!]
            \centering
        \begin{tikzpicture}

% lattice points
\foreach \x in {0,...,5}
\foreach \y in {0,...,2}{
    \node[main node] at (\x,\y){ };
}

\foreach \x in {0,...,2}
\foreach \y in {3,4}{
    \node[main node] at (\x,\y){ };
}

\foreach \x in {3,...,5}
\foreach \y in {0,...,2}{
    \node[main node] at (\x,\y){ };
}
% points in the ideal
\foreach \x in {0,...,3}
\foreach \y in {0,...,2}{
    %\node[dot] at (\x,5){ };
    \node[minu node] at (\x,\y){ };
}

\foreach \x in {4,5}
\foreach \y in {0,1}{
    %\node[dot] at (\x,5){ };
    \node[minu node] at (\x,\y){ };
}

\node[minu node] at (0,3) {};
\node[minu node] at (3,2) {};

\foreach \x in {0,1}
\foreach \y in {0,1,3}{
    \draw (\x,\y+0.05) --(\x, \y+0.95);
    \draw (\x+0.05,\y) --(\x+0.95, \y);
    \draw (\x+0.05,4) --(\x+0.95, 4);
}

\foreach \y in {0,1,3}{
\draw (2,\y+.05)--(2,\y+.95);
}

\draw (0.05,2)--(0.95,2);
\draw (1.05,2)--(1.95,2);

\foreach \y in {0,1}{
\draw (5,\y+0.05) --(5, \y+0.95);
}

\foreach \x in {3,4}{
\draw (\x+0.05,0)--(\x+0.95,0);
\draw (\x+0.05,1)--(\x+0.95,1);
\draw (\x+0.05,2)--(\x+0.95,2);
\draw (\x,0.05)--(\x,0.95);
\draw (\x,1.05)--(\x,1.95);
}

% boundary coordinates
%\foreach \x in {2,...,6}
%\node[below,yshift=-1mm] at (\x-1,0) {\x};
%\node[below,xshift=-3mm,yshift=-1mm] at (0,0) {1};

%\foreach \y in {2,...,5}
%\node[xshift=-3mm] at (0,\y-1) {\y};
\end{tikzpicture}
\caption{Decomposition of the ideal $I$ from Example \ref{eg:seq}.}
\label{fig:ferrers}
\end{figure}
\end{example}

\section{Minuscule posets of type \texorpdfstring{$A$}{A}}\label{sec:type_A}

%\section{Minuscule posets of type $A$}\label{sec:type_A}

The goal of this and the next section is to describe the posets of the form
$\akp$ where $P$ is a minuscule poset. We start with the posets
$P=[a]\times[b]$, which correspond to minuscule representations of type $A$ as
we will explain in Section \ref{sec:repthy}. Viewing $P$ as embedded in the
lattice $\Z_+^2$ as in Section \ref{sec:sequence_posets}, we note that two
elements $(i_1,j_1)$ and $(i_2, j_2)$ form an antichain in $[a]\times [b]$ if and
only if either (a) $i_1 < i_2$ and $j_1 > j_2$, or (b) $i_1 > i_2$ and $j_1 <
j_2$. A basic induction then yields the
following characterization of antichains in $P$, which we record for ease of
reference.

\begin{lemma}
\label{lem:jab_antichain}
Let $P=[a]\times[b]$. Then a subset of $P$ forms an antichain in $P$ if and
only if it can be written in the form $A=\{(x_1,y_1), \cdots, (x_k,y_k)\}$ where
$x_1 < \cdots < x_k$ and $y_1 > \cdots > y_k$.\qed
\end{lemma}

The main result of the section is the following proposition, which describes
$\akp$ as a product of two binomial posets.

\begin{proposition}\label{prop:akab} 
If $a, b, k \in \N$ satisfy $k \leq \min(a, b)$, then 
$\cala_k([a]\times[b]) \cong \calc(a, k) \times \calc(b, k)$ as posets.
\end{proposition}

\begin{proof}
By Lemma \ref{lem:jab_antichain}, any antichain of $P$ of size $k$ can be
written uniquely as $$ A = \{ (x_1, y_1), \ (x_2, y_2), \ \cdots, \ (x_k, y_k)\}
,$$ where $1 \leq x_1 < x_2 < \cdots < x_k \leq a$ and $b \geq y_1 > y_2 >
\cdots > y_k \geq 1$. It follows that there is a function $\phi : \cala_k(P) \ra
\calc(a, k) \times \calc(b, k)$ given by $$ \phi(A) = (\phi_1(A), \phi_2(A)) =
((x_1, x_2, \cdots, x_k), \ (y_k, y_{k-1}, \cdots, y_1)) .$$ Furthermore, $\phi$
is a bijection, because the assignment \[((x_1, x_2,\cdots, x_k), (y_k,
y_{k-1},\cdots, y_1) )\mapsto \{(x_1,y_1), \cdots, (x_k,y_k)\}\] takes
$\calc(a,k)\times \calc(b,k)$ to $\akp$ by Lemma \ref{lem:jab_antichain} and is
clearly a two-sided inverse of $\phi$.

We claim that $\phi$ is an isomorphism of posets. To see this, it is enough to
prove that $\phi$ respects covering relations. Let $A \in \cala_k(P)$, and write
$A = \{ (x_1, y_1), \ (x_2, y_2), \ \cdots, \ (x_k, y_k)\}$, where $x_1 < x_2 <
\cdots < x_k$ and $y_1 > y_2 > \cdots > y_k$.

Suppose that we have $A' \lessdot_{\cala_k(P)} A$ for some $A'\in \akp$.
Proposition \ref{prop:basics} (ii) implies that $A'$ can be obtained from $A$ by
replacing one of the $(x_i, y_i)$ by $(x'_i, y'_i)$, where $(x'_i, y'_i)
\lessdot_P (x_i, y_i)$.  This condition means that we either have (a) $x_i'=x_i$
and $y_i'=y_i-1$, or (b) $x_i'=x_i-1$, $y_i'=y_i$. In case (a), we have
$\phi_1(A')=\phi_1(A)$ and $x_1<x_2<\cdots<x_{i-1}<x_i'<x_{i+1}<\cdots<x_k$,
which implies that $y_1>y_2>\cdots>y_{i-1}>y_i'>y_{i+1}>\cdots>y_k$ by Lemma
\ref{lem:jab_antichain}.  Since $y_i'=y_i-1$, it follows that
$\phi_2(A')<\phi_2(A)$ is a covering relation in $\calc(b, k)$.  Similarly, in
case (b), we have $\phi_2(A')=\phi_2(A)$ and $\phi_1(A')<\phi_1(A)$ is a
covering relation in $\calc(a, k)$.  In either case, $\phi(A') < \phi(A)$ is a
covering relation $\calc(a, k) \times \calc(b, k)$.

Conversely, suppose that $(X', Y') \lessdot_{\calc(a, k) \times \calc(b, k)} (X,
Y)$. This implies that we either have (a) $Y' = Y$ and $X' \lessdot_{\calc(a, k)}
X$, or (b) $X' = X$ and $Y' \lessdot_{\calc(b, k)} Y$. Suppose that we are in
case (a). Lemma \ref{lem:cak} (ii) implies that if we write $X = \{x_1, x_2,
\cdots, x_k\}$ with $x_1 < x_2 < \cdots < x_k$, then there exists $i \in [k]$
such that $X' = X \backslash \{x_i\} \cup \{x'_i\}$, $x'_i = x_i - 1$, and $$
x_1 < x_2 < \cdots < x_{i-1} < x'_i < x_{i+1} < \cdots < x_k .$$ It follows that
$A = \{(x_1, y_1), \ (x_2, y_2), \cdots, (x_k, y_k)\}$ and $A' = (A \backslash
\{(x_i, y_i)\}) \cup \{(x'_i, y_i)\}$ are antichains in $\cala_k(P)$, and that
$(x'_i, y_i) \lessdot_P (x_i, y_i)$.  Proposition \ref{prop:basics} (ii) then
implies that $A' \lessdot_{\cala_k(P)} A$.  This completes the proof of case
(a). Case (b) is proved using an analogous argument, which completes the proof
of the proposition.
\end{proof}

\begin{corollary}
\label{cor:Durfee}
If $a,b,k\in \N$ satisfy $k\le\min(a,b)$, then we have $ \cala_k([a]\times
[b])\cong \dkab $ as posets.
\end{corollary}

\begin{proof}
This follows immediately from Propositions \ref{prop:Durfee} and
\ref{prop:akab}.
\end{proof}

\section{Other minuscule posets}
\label{sec:other_types}

We now investigate the posets $\akp$ for minuscule posets of other types.  We
first deal with the infinite minuscule family $P_n=J([n]\times [2])$, which
correspond to the spin representations of type $D_{n+2}$ for $n\ge 2$ and
satisfy $\width(P_n)=\lfloor (n+2)/2\rfloor$. 

\begin{proposition}
    \label{prop:Dspin}
    If $n,k\in \N$ satisfy $0\le k\le \lfloor (n+2)/2\rfloor$, then we have
    isomorphisms of posets \[ \cala_k(J([n]\times[2]))\cong
    \cala_k(\calc(n+2,2))\cong \calc(n+2,2k).  \]
\end{proposition}

\begin{proof}
Since $J([n]\times [2])\cong \calc(n+2,2)$ by Proposition \ref{prop:jab_iso}, it
suffices to prove $\cala_k(\calc(n+2,2))\cong \calc(n+2,2k)$. Let
$P=\calc(n+2,2)$ and write every element of $P$ in the form $(x,y)$ where $x<y$
as in Definition \ref{def:cak}. Then we may view $P$ as embedded in the lattice
$\Z_+^2$ as we did the poset $[a]\times[b]$ in the previous two sections, because two elements $p_1=(x_1,y_1)$ and
$p_2=(x_2,y_2)$ in $P$ satisfy $p_1\le_\calc p_2$ if and only if $x_1\le x_2$
and $y_1\le y_2$. It follows that the characterization of antichains in
$[a]\times [b]$ given in Lemma \ref{lem:jab_antichain}
holds for the poset $P=\calc(n+2,2)$ as well, so that
every antichain of $P$ of size $k$ can be written uniquely as 
\[
    A = \{ (x_1, y_1), \ (x_2, y_2), \ \cdots, \ (x_k, y_k)\}
\] 
where $x_1 < x_2 < \cdots < x_k$ and $y_1 > y_2 > \cdots > y_k$.
By assumption we have $x_k<y_k$, so we have 
\[
x_1<x_2<\cdots<x_k<y_k<\cdots<y_2<y_1. 
\] 
It follows that there is a function $\phi : \cala_k(P) \ra \calc(n+2, 2k)$ given
by $$ \phi(A) = (x_1, x_2, \cdots, x_k, y_k, \cdots, y_2, y_1).$$  Furthermore,
$\phi$ is a bijection, because the assignment 
\[
    (x_1, x_2,\cdots, x_k, y_k, y_{k-1},\cdots, y_1) \mapsto \{(x_1,y_1), \cdots, (x_k,y_k)\}
\]
is clearly a two-sided inverse of $\phi$ that takes $\calc(n+2,2k)$ to $\akp$.

We claim that $\phi$ is an isomorphism of posets. To see this, it is enough to
prove that $\phi$ and $\phi\inverse$ preserve covering relations. Let $A \in
\cala_k(P)$, and write $A = \{ (x_1, y_1), \ (x_2, y_2), \ \cdots, \ (x_k,
y_k)\}$, where $x_1 < x_2 < \cdots < x_k$ and $y_1 > y_2 > \cdots > y_k$.

Suppose that we have $A' \lessdot_{\cala_k(P)} A$ for some $A'\in \akp$.
Proposition \ref{prop:basics} (ii) implies that $A'$ can be obtained from $A$ by
replacing one of the $(x_i, y_i)$ by $(x'_i, y'_i)$, where $(x'_i, y'_i)
\lessdot_P (x_i, y_i)$. By Lemma \ref{lem:cak} (ii), it follows that we
either have (a) $x_i'=x_i$ and $y_i'=y_i-1$, or (b) $x_i'=x_i-1$, $y_i'=y_i$.
In the first case, we have $x_1<\cdots< x_i'=x_i< \cdots<x_k$, so that we must
have 
\[ x_1<\cdots<x_i'<\cdots< x_k<y_k<\cdots <y_i'<\cdots<y_1 \] 
by the argument from the first paragraph of this proof. It follows that 
\begin{eqnarray*} 
\varphi(A')&=&(x_1,\cdots,x_i, \cdots,x_k, y_k, \cdots,y_i', \cdots,  y_1) \\ 
      &\lessdot&(x_1,\cdots,x_i, \cdots,x_k, y_k, \cdots,y_i, \cdots, y_1)\\ 
      &=&\varphi(A) \end{eqnarray*} 
in $\calc(n+2,2k)$. Moreover, since $y_i'=y_i-1$, the relation
$\varphi(A')<\varphi(A)$ is a covering relation in $\calc(n+2,2k)$ by Lemma
\ref{lem:cak} (ii).  Similarly, a symmetric argument shows that in the second
case we have 
\begin{eqnarray*} 
\varphi(A')&=&(x_1,\cdots,x_i', \cdots,x_k,y_k, \cdots,y_i, \cdots,  y_1) \\ 
&\lessdot&(x_1,\cdots,x_i, \cdots,x_k, y_k, \cdots,y_i, \cdots,  y_1)\\ 
&=&\varphi(A) 
\end{eqnarray*} 
in $\calc(n+2,2k)$. 

Conversely, suppose that $\varphi(A')\lessdot_\calc \varphi(A)$ for some $A'\in
\akp$.  Lemma \ref{lem:cak} (ii) implies that one of the following conditions
must hold for some $i\in [k]$:

\begin{enumerate}
\item we have $\varphi(A')=(x_1, \cdots, x_i'=x_i-1, \cdots, x_k,y_k,\cdots, y_1)$; 
\item we have $\varphi(A)=(x_1, \cdots, x_k,y_k,\cdots,y_i'=y_i-1,\cdots, y_1)$. 
\end{enumerate} 

In the first case, we can use the inverse of $\varphi$ mentioned earlier to
recover $A'$ as the set $$ A'=\{(x_j,y_j):j\in [k], j\neq i\}\cup\{(x'_i,y_i)\}
,$$ where $(x'_i,y_i)\lessdot_P (x_i,y_i)$ by Lemma \ref{lem:cak} (ii); therefore $A'\lessdot_{\akp} A$ by Proposition \ref{prop:basics} (ii).  A
similar argument shows that $A'\lessdot_{\akp} A$ in the second case, and we are
done.
 \end{proof}

\begin{theorem}\label{thm:dist}
Let $P$ be a minuscule poset, let $k\in \N$, and suppose $k\le
\width(P)$.
\begin{itemize}
    \item[{\rm (i)}]{If $P \cong [a] \times [b]$, then we have $\cala_k(P) \cong
         \calc(a, k) \times \calc(b, k)$ as posets.}
     \item[{\rm (ii)}]{If $P \cong J([n] \times [2])$, then we have
      $\cala_k(P)\cong \cala_k(\calc(n+2,2))\cong \calc(n+2,2k)$ as posets.}
  \item[{\rm (iii)}]{If $P \cong J^m([2]\times [2])$ where $m\in \N$, then
      $\cala_k(P)$ is a singleton if $k \in \{0, 2\}$, and $\cala_k(P)$ is
  isomorphic to $P$ if $k = 1$.}
 \item[{\rm (iv)}]{If $P \cong J^2([2] \times [3])$, then $\cala_k(P)$ is a
      singleton if $k = 0$, $\cala_k(P)$ is isomorphic to $P$ if $k = 1$, and
  $\cala_k(P)$ is isomorphic to $J^3([2] \times [2])$ if $k = 2$.}
\item[{\rm (v)}]{If $P \cong J^3([2] \times [3])$, then $\cala_k(P)$ is a
     singleton if $k = 0$ or $k = 3$, and $\cala_k(P)$ is isomorphic to $P$ if
 $k = 1$ or $k = 2$.}
 \item[{\rm (vi)}]{The poset $\akp$ is a distributive lattice under the order
      $\le_k$.}
\end{itemize}
\end{theorem}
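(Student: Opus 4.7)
Parts (i) and (ii) are immediate restatements of Propositions \ref{prop:akab} and \ref{prop:Dspin}, which have already been proved, so no further work is required for them. For each of parts (iii), (iv), and (v), the cases $k = 0$ and $k = 1$ are trivial: $\cala_0(P)$ is always a singleton containing only the empty antichain, and $\cala_1(P) \cong P$ by Remark \ref{rmk:basics}(i). So the main work lies in the remaining cases $k = 2$ throughout, and $k = 3$ in part (v), which again gives a singleton.

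For (iii), my plan is to establish a uniform structural description of $P_m := J^m([2]\times[2])$ by induction on $m$: each $P_m$ consists of a chain of $m+1$ elements at the bottom, a single diamond contributing two incomparable ``side'' elements, and a chain of $m+1$ elements at the top, for a total of $2m+4$ elements. The base case $m=0$ is immediate, and the inductive step uses the observation that taking ideals of such a poset lengthens each of the two chains by one while preserving a single central diamond. Given this description, the unique antichain of size $2$ in $P_m$ consists of the two side elements of the central diamond, so $\cala_2(P_m)$ is a singleton, as claimed.

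For (iv) and (v), which concern the specific finite posets $J^2([2]\times[3])$ and $J^3([2]\times[3])$ shown in Figure \ref{fig:minuscule}, I would enumerate the antichains of each relevant size directly from the Hasse diagram, compute the covering relations in $\cala_k(P)$ using Proposition \ref{prop:basics}(iv), and exhibit an explicit bijection with the target poset (the singleton, a copy of $P$, or $J^3([2]\times[2])$) that preserves covering relations in both directions. The style of argument would parallel the proofs of Propositions \ref{prop:akab} and \ref{prop:Dspin}, where the bijection is given on the ``coordinates'' of the antichain and the compatibility with covering relations is verified case by case.

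Part (vi) follows from the explicit descriptions in (i)--(v). In (i), the target $\calc(a,k)\times \calc(b,k)$ is a product of distributive lattices by Corollary \ref{cor:J} and hence distributive; in (ii), the target $\calc(n+2, 2k)$ is distributive by Corollary \ref{cor:J}; and in (iii)--(v), every target is either a singleton, a copy of the underlying minuscule poset $P = J^n(Q)$ for some poset $Q$ and some $n\ge 0$ (which is automatically a distributive lattice), or the poset $J^3([2]\times[2])$ (also a distributive lattice). The main obstacle will be the explicit isomorphism claimed in (iv), namely $\cala_2(J^2([2]\times[3])) \cong J^3([2]\times[2])$, which requires matching two non-trivial Hasse diagrams of moderately sized exceptional posets; by contrast, the uniform inductive argument in (iii) is routine once the correct structural hypothesis is set up.
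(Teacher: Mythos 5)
Your proposal is correct and follows essentially the same route as the paper's proof: parts (i) and (ii) are cited as restatements of Propositions \ref{prop:akab} and \ref{prop:Dspin}, the cases $k\in\{0,1\}$ are dispatched via Remark \ref{rmk:basics}(i), part (iii) is handled by an induction on $m$ establishing the shape of $J^m([2]\times[2])$ and the uniqueness of its size-$2$ antichain, parts (iv) and (v) are settled by direct computation on the two exceptional posets (the paper records the $E_7$ case in Figure \ref{fig:e7iso}), and part (vi) follows because every poset appearing in (i)--(v) is a singleton, a product of posets of the form $\calc(n,k)$, or of the form $J(Q)$, all of which are distributive lattices. No substantive differences or gaps.
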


\begin{proof}
Part (i) is a restatement of Proposition \ref{prop:akab}, and (ii) is a
restatement of Proposition \ref{prop:Dspin}. Induction shows that the Hasse
diagram of the poset $J^m([2]\times [2])$ for $m\in \N$ is as depicted in Figure
\ref{fig:minuscule} (c), and the assertions in (iii), (iv) and (v) then all
follow by direct computation. In particular, we have $\cala_2(P)\cong P$ for
$P=J^3([2]\times[3])$, as illustrated in Figure \ref{fig:e7iso}.

Recall that the singleton poset, posets of the form $J(P)$ where $P$ is a finite
poset, and products of distributive lattices are distributive lattices, as are
the binomial posets $\calc(n,k)$ by Remark \ref{rmk:symmetry}.  By (i)--(v), every
poset of the form $\cala_k(P)$ where $P$ is a minuscule poset has one of the
forms described above, and (vi) follows.
\end{proof}

\begin{figure}[h!]
\centering
\subfloat
{
\begin{tikzpicture} 
\node[main node] (0) {}; 
\node[main node] (1) [above right=0.5cm and 0.5cm of 0] {};
\node[main node] (2) [above right=0.5cm and 0.5cm of 1] {};
\node[main node] (3) [above right=0.5cm and 0.5cm of 2] {};
\node[main node] (4) [above right=0.5cm and 0.5cm of 3] {};
\node[main node] (5) [above right=0.5cm and 0.5cm of 4] {};

\node[main node] (00) [above left=0.5cm and 0.5cm of 3] {};
\node[main node] (10) [above right=0.5cm and 0.5cm of 00] {};
\node[main node] (20) [above right=0.5cm and 0.5cm of 10] {};

\node[main node] (01) [above left=0.5cm and 0.5cm of 10] {};
\node[main node] (02) [above right=0.5cm and 0.5cm of 01] {};
\node[main node] (03) [above right=0.5cm and 0.5cm of 02] {};

\node[main node] (11) [above left=0.5cm and 0.5cm of 01] {};
\node[main node] (12) [above right=0.5cm and 0.5cm of 11] {};
\node[main node] (13) [above right=0.5cm and 0.5cm of 12] {};
\node[main node] (14) [above right=0.5cm and 0.5cm of 13] {};
\node[main node] (15) [above right=0.5cm and 0.5cm of 14] {};

\node[main node] (21) [above left=0.5cm and 0.5cm of 11] {};
\node[main node] (22) [above right=0.5cm and 0.5cm of 21] {};
\node[main node] (23) [above right=0.5cm and 0.5cm of 22] {};
\node[main node] (24) [above right=0.5cm and 0.5cm of 23] {};
\node[main node] (25) [above right=0.5cm and 0.5cm of 24] {};

\node[main node] (31) [above left=0.5cm and 0.5cm of 24] {};
\node[main node] (32) [above right=0.5cm and 0.5cm of 31] {};
\node[main node] (33) [above right=0.5cm and 0.5cm of 32] {};
\node[main node] (34) [above right=0.5cm and 0.5cm of 33] {};
\node[main node] (35) [above right=0.5cm and 0.5cm of 34] {};

\node [right=0.05cm of 0] {$a$};
\node [right=0.05cm of 1] {$b$};
\node [right=0.05cm of 2] {$c$};
\node [right=0.05cm of 3] {$d$};
\node [left=0.05cm of 00] {$e$};
\node [right=0.05cm of 4] {$f$};
\node [left=0.05cm of 10] {$g$};
\node [right=0.05cm of 5] {$h$};
\node [left=0.05cm of 01] {$i$};
\node [right=0.05cm of 20] {$j$};
\node [left=0.05cm of 11] {$k$};
\node [right=0.05cm of 02] {$l$};
\node [left=0.05cm of 21] {$m$};
\node [left=0.05cm of 12] {$n$};
\node [right=0.05cm of 03] {$o$};
\node [left=0.05cm of 22] {$p$};
\node [right=0.05cm of 13] {$q$};
\node [left=0.05cm of 23] {$r$};
\node [right=0.05cm of 14] {$s$};
\node [left=0.05cm of 24] {$t$};
\node [right=0.05cm of 15] {$u$};
\node [left=0.05cm of 31] {$v$};
\node [right=0.05cm of 25] {$w$};
\node [left=0.05cm of 32] {$x$};
\node [left=0.05cm of 33] {$y$};
\node [left=0.05cm of 34] {$z$};
\node [left=0.05cm of 35] {$\theta$};

\path[draw]
(0)--(1)--(2)--(3)--(4)--(5)
(00)--(10)--(20)
(01)--(02)--(03)
(11)--(12)--(13)--(14)--(15)
(21)--(22)--(23)--(24)--(25)
(31)--(32)--(33)--(34)--(35)
(3)--(00)
(4)--(10)--(01)--(11)--(21)
(5)--(20)--(02)--(12)--(22)
(03)--(13)--(23)
(14)--(24)--(31)
(15)--(25)--(32);
\end{tikzpicture}
}
\qquad\qquad
\subfloat{
\begin{tikzpicture} 
\node (0) {}; 
\node (c) [above = 9*0.5cm of 0] {\Large{$\cong$}};
\end{tikzpicture}
}
\qquad\qquad
\subfloat{
\begin{tikzpicture} 
\node[main node] (0) {}; 
\node[main node] (1) [above right=0.5cm and 0.5cm of 0] {};
\node[main node] (2) [above right=0.5cm and 0.5cm of 1] {};
\node[main node] (3) [above right=0.5cm and 0.5cm of 2] {};
\node[main node] (4) [above right=0.5cm and 0.5cm of 3] {};
\node[main node] (5) [above right=0.5cm and 0.5cm of 4] {};

\node[main node] (00) [above left=0.5cm and 0.5cm of 3] {};
\node[main node] (10) [above right=0.5cm and 0.5cm of 00] {};
\node[main node] (20) [above right=0.5cm and 0.5cm of 10] {};

\node[main node] (01) [above left=0.5cm and 0.5cm of 10] {};
\node[main node] (02) [above right=0.5cm and 0.5cm of 01] {};
\node[main node] (03) [above right=0.5cm and 0.5cm of 02] {};

\node[main node] (11) [above left=0.5cm and 0.5cm of 01] {};
\node[main node] (12) [above right=0.5cm and 0.5cm of 11] {};
\node[main node] (13) [above right=0.5cm and 0.5cm of 12] {};
\node[main node] (14) [above right=0.5cm and 0.5cm of 13] {};
\node[main node] (15) [above right=0.5cm and 0.5cm of 14] {};

\node[main node] (21) [above left=0.5cm and 0.5cm of 11] {};
\node[main node] (22) [above right=0.5cm and 0.5cm of 21] {};
\node[main node] (23) [above right=0.5cm and 0.5cm of 22] {};
\node[main node] (24) [above right=0.5cm and 0.5cm of 23] {};
\node[main node] (25) [above right=0.5cm and 0.5cm of 24] {};

\node[main node] (31) [above left=0.5cm and 0.5cm of 24] {};
\node[main node] (32) [above right=0.5cm and 0.5cm of 31] {};
\node[main node] (33) [above right=0.5cm and 0.5cm of 32] {};
\node[main node] (34) [above right=0.5cm and 0.5cm of 33] {};
\node[main node] (35) [above right=0.5cm and 0.5cm of 34] {};

\node [right=0.05cm of 0] {$ef$};
\node [right=0.05cm of 1] {$eh$};
\node [right=0.05cm of 2] {$gh$};
\node [right=0.05cm of 3] {$ih$};
\node [left=0.05cm of 00] {$ij$};
\node [right=0.05cm of 4] {$kh$};
\node [left=0.05cm of 10] {$kj$};
\node [right=0.05cm of 5] {$mh$};
\node [left=0.05cm of 01] {$kl$};
\node [right=0.05cm of 20] {$mj$};
\node [left=0.05cm of 11] {$ko$};
\node [right=0.05cm of 02] {$ml$};
\node [left=0.05cm of 21] {$no$};
\node [left=0.05cm of 12] {$mo$};
\node [right=0.05cm of 03] {$mn$};
\node [left=0.05cm of 22] {$po$};
\node [right=0.05cm of 13] {$mq$};
\node [left=0.05cm of 23] {$pq$};
\node [right=0.05cm of 14] {$ms$};
\node [left=0.05cm of 24] {$ps$};
\node [right=0.05cm of 15] {$mu$};
\node [left=0.05cm of 31] {$rs$};
\node [right=0.05cm of 25] {$pu$};
\node [left=0.05cm of 32] {$ru$};
\node [left=0.05cm of 33] {$tu$};
\node [left=0.05cm of 34] {$vu$};
\node [left=0.05cm of 35] {$vw$};
\path[draw]
(0)--(1)--(2)--(3)--(4)--(5)
(00)--(10)--(20)
(01)--(02)--(03)
(11)--(12)--(13)--(14)--(15)
(21)--(22)--(23)--(24)--(25)
(31)--(32)--(33)--(34)--(35)
(3)--(00)
(4)--(10)--(01)--(11)--(21)
(5)--(20)--(02)--(12)--(22)
(03)--(13)--(23)
(14)--(24)--(31)
(15)--(25)--(32);
\end{tikzpicture}
}
\caption{Isomorphism between $P=J^3([2]\times[3])$ and $\cala_2(P)$, with each
element $\{\alpha,\beta\}\in \cala_2(P)$ written as $\alpha\beta$.}
\label{fig:e7iso}
\end{figure}

\section{Branching rules of minuscule representations}
\label{sec:repthy}

Recall from the introduction that the minuscule posets are precisely the posets
$P$ such that the poset $J(P)$ appears as the weight poset of a minuscule
representation of a simple Lie algebra. The goal of this section is to explain a
remarkable connection in this setting between the sets $\akp$ and branching
rules of suitable restrictions of the associated minuscule representations. We
fix $\C$ as the ground field throughout the section.

Let $\fg$ be a simple Lie algebra over $\C$ of Dynkin type $X$ and rank $n$.  It
is known that a minuscule representation exists for $\fg$ if and only if $\fg$
has type $A_{n} (n\ge 1), D_n (n\ge 4), E_6,$ or $E_7$, and that in these types
each minuscule representation has a fundamental weight $\omega_p$ for some $1\le
p\le n$ as its highest weight; we will denote such a minuscule representation by
$\lxp$.  The Dynkin diagrams of the aforementioned types are shown in Figure
\ref{fig:dynkin} below, where we adopt the labelling conventions of
\cite[Chapter 4]{kac90}. The precise values of $p$ corresponding to the possible
minuscule representations are listed in Table \ref{tab:minu}, along with the
isomorphism type of the minuscule poset $P$ for which the weight poset of $\lxp$
is isomorphic to $J(P)$; see also Figure \ref{fig:minuscule} and \cite[Theorem
8.3.10 (v)]{green13}.  We denote the minuscule poset $P$ corresponding to $\lxp$
by $\pxp$ from now on.
% Dynkin diagrams
\begin{figure}[h!] \centering \subfloat[{$A_n$}] {
        \begin{tikzpicture}

    \node[minu node] (1) {};
            \node[minu node] (2) [right=1cm of 1] {};
      %\node (33) [right=0.5cm of 32] {$\cdots$};
            \node[minu node] (3) [right=1.5cm of 2] {};
            \node[minu node] (4) [right=1cm of 3] {};

            \node (11) [below=0.1cm of 1] {\small{$1$}};
            \node (22) [below=0.1cm of 2] {\small{$2$}};
            \node (33) [below=0.1cm of 3] {\small{$n-1$}};
            \node (44) [below=0.15cm of 4] {\small{$n$}};

            \path[draw]
            (1)--(2)
            (3)--(4);

            \path[draw,dashed]
            (2)--(3);
\end{tikzpicture}
}
\quad\quad\quad
\subfloat[{$D_n$}]
{
\begin{tikzpicture}

            \node[minu node] (1) {};
            \node[main node] (2) [right=1cm of 1] {};
      %\node (33) [right=0.5cm of 32] {$\cdots$};
            \node[main node] (3) [right=1.5cm of 2] {};
            \node[main node] (4) [right=1cm of 3] {};
            \node[minu node] (5) [above right=0.7cm and 0.9cm of 4] {};
            \node[minu node] (6) [below right=0.7cm and 0.9cm of 4] {};

            \node (11) [below=0.1cm of 1] {\small{$1$}};
            \node (22) [below=0.1cm of 2] {\small{$2$}};
            \node (33) [below=0.1cm of 3] {\small{$n-3$}};
            \node (44) [right=0.1cm of 4] {\small{$n-2$}};
            \node (55) [right=0.1cm of 5] {\small{$n-1$}};
            \node (66) [right=0.1cm of 6] {\small{$n$}};

            \path[draw]
            (1)--(2)
            (3)--(4)--(5)
            (4)--(6);

            \path[draw,dashed]
            (2)--(3);
\end{tikzpicture}
}\\
\subfloat[{$E_6$}]
{
\begin{tikzpicture}
    \node[minu node] (1) {};
    \node[main node] (2) [right=1cm of 1] {};
            \node[main node] (3) [right=1cm of 2] {};
            \node[main node] (4) [right=1cm of 3] {};
            \node[minu node] (5) [right=1cm of 4] {};
            \node[main node] (6) [above=1cm of 3] {};
            %\node[main node] (8) [right=1cm of 6] {};

            \path[draw]
            (1)--(2)--(3)--(4)--(5)
            (3)--(6);

            \node (11) [below=0.1cm of 1] {\small{$1$}};
            \node (22) [below=0.1cm of 2] {\small{$2$}};
            \node (33) [below=0.1cm of 3] {\small{$3$}};
            \node (44) [below=0.1cm of 4] {\small{$4$}};
            \node (55) [below=0.1cm of 5] {\small{$5$}};
            \node (66) [above=0.1cm of 6] {\small{$6$}};
\end{tikzpicture}
}
\quad\quad\quad
\subfloat[{$E_7$}]
{
\begin{tikzpicture}
    \node[main node] (1) {};
    \node[main node] (2) [right=1cm of 1] {};
            \node[main node] (3) [right=1cm of 2] {};
            \node[main node] (4) [right=1cm of 3] {};
            \node[main node] (5) [right=1cm of 4] {};
            \node[minu node] (6) [right=1cm of 5] {};
            \node[main node] (7) [above=1cm of 3] {};
            %\node[main node] (8) [right=1cm of 6] {};

            \path[draw]
            (1)--(2)--(3)--(4)--(5)--(6)
            (3)--(7);

            \node (11) [below=0.1cm of 1] {\small{$1$}};
            \node (22) [below=0.1cm of 2] {\small{$2$}};
            \node (33) [below=0.1cm of 3] {\small{$3$}};
            \node (44) [below=0.1cm of 4] {\small{$4$}};
            \node (55) [below=0.1cm of 5] {\small{$5$}};
            \node (66) [below=0.1cm of 6] {\small{$6$}};
            \node (77) [above=0.1cm of 7] {\small{$7$}};
\end{tikzpicture}
}
\caption{Simple Lie algebras admitting minuscule
representations.}
\label{fig:dynkin}
\end{figure}

\begin{table}[h!] \centering
    \begin{tabularx}{20em}{c@{\hspace{2.8em}}c@{\hspace{3.6em}}c}
         \toprule
         X  &  $p$ &  $\pxp$\\
         \midrule
         $A_n$ & $k (k\in [n])$ & $[k]\times[n+1-k]$\\
         $D_n$ & $1$ & $J^{n-3}([2]\times[2])$ \\
         $D_n$ & $n-1$ or $n$ & $J([n-2]\times[2])$ \\
         $E_6$ & 1 or 5 & $J^2([2]\times[3])$ \\
         $E_7$ & 6 & $J^3([2]\times[3])$\\
         \bottomrule
\end{tabularx}  
\caption{Classification of minuscule posets.} 
          \label{tab:minu}
      \end{table}

In addition to recovering the poset structure of the weight poset of $\lxp$,
the minuscule poset $\pxp$ can in fact also be used to simultaneously construct
$\lxp$ itself and the Lie algebra $\fg$; see \cite{wildberger,whatis}. The
construction involves turning the poset $\pxp$ into a so-called heap (in the
sense of Viennot \cite{viennot}) and then defining linear operators $X_i, Y_i,
H_i\,(1\le i\le n)$ on the free vector space $V$ spanned by the ideals of the
heap; we denote this heap by $H(X,\omega_p)$. The linear operators $X_i, Y_i,
H_i (1\le i\le n)$ generate an isomorphic copy of $\fg$ inside the Lie algebra
$\mathfrak{gl}(V)$, and the construction endows $V$ with the structure of a
$\fg$-module affording the minuscule representation $\lxp$.  In addition, the
construction has the convenient feature that $p$ will appear as the label of the
unique maximal element of $\pxp$ in the heap $\hxp$, allowing us to quickly
identify the highest weight of the minuscule representation associated with the
heap as $\omega_p$. For more details about the construction, we refer the reader
to \cite{wildberger,whatis,green13}.

\begin{example}
    \label{eg:heaps}
Figure \ref{fig:heaps} shows the heaps corresponding to the minuscule
representations $L(E_6,\omega_1)$ and $L(E_6,\omega_5)$, which are not
isomorphic as heaps because of their different labellings but have isomorphic
underlying posets $P(E_6,\omega_1)\cong P(E_6,\omega_5)\cong J^2([2]\times[3])$.

\begin{figure}[h!]
\centering
\subfloat[{$H(E_6,\omega_1)$}]
{
\begin{tikzpicture} 
\node[main node] (0) {}; 
\node[main node] (1) [above right=0.5cm and 0.5cm of 0] {};
\node[main node] (2) [above right=0.5cm and 0.5cm of 1] {};
\node[main node] (3) [above right=0.5cm and 0.5cm of 2] {};
\node[main node] (4) [above right=0.5cm and 0.5cm of 3] {};

\node[main node] (00) [above left=0.5cm and 0.5cm of 2] {};
\node[main node] (10) [above right=0.5cm and 0.5cm of 00] {};
\node[main node] (20) [above right=0.5cm and 0.5cm of 10] {};

\node[main node] (01) [above left=0.5cm and 0.5cm of 10] {};
\node[main node] (02) [above right=0.5cm and 0.5cm of 01] {};
\node[main node] (03) [above right=0.5cm and 0.5cm of 02] {};

\node[main node] (11) [above left=0.5cm and 0.5cm of 01] {};
\node[main node] (12) [above right=0.5cm and 0.5cm of 11] {};
\node[main node] (13) [above right=0.5cm and 0.5cm of 12] {};
\node[main node] (14) [above right=0.5cm and 0.5cm of 13] {};
\node[main node] (15) [above right=0.5cm and 0.5cm of 14] {};

\node [right=0.05cm of 0] {$5$};
\node [right=0.05cm of 1] {$4$};
\node [right=0.05cm of 2] {$3$};
\node [right=0.05cm of 3] {$2$};
\node [right=0.05cm of 4] {$1$};
\node [left=0.05cm of 00] {$6$};
\node [left=0.05cm of 10] {$3$};
\node [right=0.05cm of 20] {$2$};
\node [left=0.05cm of 01] {$4$};
\node [right=0.05cm of 02] {$3$};
\node [right=0.05cm of 03] {$6$};
\node [left=0.05cm of 11] {$5$};
\node [left= 0.05cm of 12] {$4$};
\node [left=0.05cm of 13] {$3$};
\node [left=0.05cm of 14] {$2$};
\node [left=0.05cm of 15] {$1$};

\path[draw]
(0)--(1)--(2)--(3)--(4)
(00)--(10)--(20)
(01)--(02)--(03)
(11)--(12)--(13)--(14)--(15)
(2)--(00)
(3)--(10)--(01)--(11)
(4)--(20)--(02)--(12)
(03)--(13);

\end{tikzpicture}

}
\qquad\qquad
\subfloat[{$H(E_6,\omega_5)$}]
{
\begin{tikzpicture} 
\node[main node] (0) {}; 
\node[main node] (1) [above right=0.5cm and 0.5cm of 0] {};
\node[main node] (2) [above right=0.5cm and 0.5cm of 1] {};
\node[main node] (3) [above right=0.5cm and 0.5cm of 2] {};
\node[main node] (4) [above right=0.5cm and 0.5cm of 3] {};

\node[main node] (00) [above left=0.5cm and 0.5cm of 2] {};
\node[main node] (10) [above right=0.5cm and 0.5cm of 00] {};
\node[main node] (20) [above right=0.5cm and 0.5cm of 10] {};

\node[main node] (01) [above left=0.5cm and 0.5cm of 10] {};
\node[main node] (02) [above right=0.5cm and 0.5cm of 01] {};
\node[main node] (03) [above right=0.5cm and 0.5cm of 02] {};

\node[main node] (11) [above left=0.5cm and 0.5cm of 01] {};
\node[main node] (12) [above right=0.5cm and 0.5cm of 11] {};
\node[main node] (13) [above right=0.5cm and 0.5cm of 12] {};
\node[main node] (14) [above right=0.5cm and 0.5cm of 13] {};
\node[main node] (15) [above right=0.5cm and 0.5cm of 14] {};

\node [right=0.05cm of 0] {$1$};
\node [right=0.05cm of 1] {$2$};
\node [right=0.05cm of 2] {$3$};
\node [right=0.05cm of 3] {$4$};
\node [right=0.05cm of 4] {$5$};
\node [left=0.05cm of 00] {$6$};
\node [left=0.05cm of 10] {$3$};
\node [right=0.05cm of 20] {$4$};
\node [left=0.05cm of 01] {$2$};
\node [right=0.05cm of 02] {$3$};
\node [right=0.05cm of 03] {$6$};
\node [left=0.05cm of 11] {$1$};
\node [left= 0.05cm of 12] {$2$};
\node [left=0.05cm of 13] {$3$};
\node [left=0.05cm of 14] {$4$};
\node [left=0.05cm of 15] {$5$};

\path[draw]
(0)--(1)--(2)--(3)--(4)
(00)--(10)--(20)
(01)--(02)--(03)
(11)--(12)--(13)--(14)--(15)
(2)--(00)
(3)--(10)--(01)--(11)
(4)--(20)--(02)--(12)
(03)--(13);
\end{tikzpicture}

}
\caption{Heaps for the minuscule representations $L(E_6,\omega_1)$ and
$L(E_6,\omega_5)$.}
\label{fig:heaps}
\end{figure}
\end{example}

\begin{remark}
    \label{rmk:automorphism}
Example \ref{eg:heaps} illustrates the fact that in general a minuscule
representation $\lxp$ (or, equivalently, the corresponding heap $\hxp$) contains
strictly more data than those of the associated minuscule poset $\pxp$ due to
choices in labelling. On the other hand, we note that the only cases where
distinct values $p,p'$ give rise to isomorphic minuscule posets
$P(X,\omega_p)\cong P(X,\omega_{p'})$ are as follows: when $X=A_n$ and $p+p'=n$,
or when $X=D_4$ and $2\notin \{p,p'\}$, or when $X=D_n$ for some $n>4$ and
$\{p,p'\}=\{n-1,n\}$ as sets, or when $X=E_6$ and $\{p,p'\}=\{1,5\}$ as sets. In
all these cases, $p$ and $p'$ are conjugate by an involutive automorphism of the
Dynkin diagram, and consequently the minuscule representations $L(X,\omega_p)$
and $L(X,\omega_p')$ differ only by the corresponding automorphism of the Lie
algebra $\fg$. It follows that although a minuscule poset  generally contains
less information than the representation it arises from, one can still recover
minuscule representations from minuscule posets up to diagram automorphisms of
$\fg$ in all cases. \end{remark}

Let $L=\lxp$ be a minuscule representation of a simple Lie algebra $\fg$.  Let
$\fk$ be the subalgebra of $\fg$ generated by the set $\{e_i, f_i, h_i : 1 \leq
i \leq n, i \ne p\}$, let $Y$ be the Dynkin type of $\fk$, and consider the
restriction $L\!\downarrow_\fk$ of the module $L$ to $\fk$. By
\cite[Proposition 8.2.9 (iv)]{green13}, the decomposition of $\res$ into simple
components can be described in a uniform way via the heap $H=\hxp$, as follows.
Denote the elements in $H$ with label $p$ by $\pi_1,\cdots,\pi_k$, ordered so
that $\pi_1\le \pi_2\le \cdots \le \pi_k$ in the minuscule poset $P=\pxp$. For
each $0\le i\le k$, let $H'_i$ be the subheap of $H$ consisting of all
elements $\alpha$ such that $\alpha\not \le \pi_{i}$ and $\pi_{i+1}\not\le
\alpha$, where the former condition holds vacuously for $i=0$ and the latter
condition holds vacuously for $i=k$.  In other words, we take $H'_i$ to be the
subheap of $H$ obtained by removing all elements in the order ideal generated by
$\pi_{i}$ (which is considered empty if $i=0$) or in the order filter generated
by $\pi_{i+1}$ (which is considered empty
if $i=k$). The subheap $H'_i$ will not contain any element labelled by $p$,
and it turns out that $H'_i$ will coincide with the heap of a minuscule
representation, $L'_i$, of $\fk$. Moreover, we have $L\!
\downarrow_\fk=\oplus_{i=0}^{k} L'_i$ as $\fk$-modules, where $L'_i$ is the trivial representation of $\fk$ if
$H'_i$ is empty (\cite[Proposition
8.2.9 (iv)]{green13}). Note that as remarked in the paragraph above Example
\ref{eg:heaps}, it is easy to identify which
minuscule representation of $\fk$ each $L'_i$ is by reading the label of the top
element in $H'_i$ (whenever $H'_i$ is nonempty).

\begin{example}
\label{eg:e7branching}
Figure \ref{fig:e7branching} shows the heap $H=H(E_7,\omega_7)$ associated to
the minuscule representation $L$ of fundamental weight $\omega_7$ for the Lie
algebra $\fg$ of type $E_7$. We can determine the branching rule for the
restriction of $L$ to the type-$E_6$ subalgebra $\fk$ generated by
$\{e_i,f_i,h_i: i\in [7], i\neq 6\}$ as follows. The heap $H$ contains $k=3$
elements with label $7$, which are located at the top, bottom, and the middle
left of the heap, so the restriction $L\!\downarrow_\fk$ contains $k+1=4$
simple components, $L'_0,L'_1,L'_2$ and $L_3'$. The sets $H'_1$ and $H'_3$ are
empty, so the components $L'_0$ and $L'_3$ both afford the trivial
representation.  The subheap $H'_1$ is colored in red, and may be characterized
as the interval $[a,b]=\{x\in H: a\le x\le b\}$ where $a$ is the unique minimal
element with label 5 in $H$ and $b$ is the unique maximal element with label 1 in $H$.
Since the top element of $H'_1$ has label $1$, the module $L'_1$ has highest
weight $\omega_1$ and thus affords the minuscule representation
$L(E_6,\omega_1)$. (Note that in the labelling convention for type
$E_6$, we should identify the vertex $7$ from the $E_7$ Dynkin diagram as the
vertex $6$ in the $E_6$ diagram; see Figure \ref{fig:dynkin}.) Similarly, the
subheap $H'_2$ contains the interval in $H$ from the unique minimal element
labelled by 1 to the unique maximal element labelled by 5, so that $L'_3$ is a
copy of the minuscule representation $L(E_6,\omega_5)$ of $\fk$.  The minuscule
posets underlying the heaps $H'_1$ and $H'_2$ are both isomorphic to
$J^2([2]\times[3])$, so it follows from Theorem \ref{thm:dist} (iv) that for all
$0\le i\le k$, the weight poset of $L'_i$ is isomorphic to $\mathcal{A}_i(P)$
for the minuscule poset $P=J^2([2]\times [3])$.

\begin{figure}[h!]
\centering
\begin{tikzpicture} 
\node[minu node] (0) {}; 
\node[red node] (1) [above right=0.5cm and 0.5cm of 0] {};
\node[red node] (2) [above right=0.5cm and 0.5cm of 1] {};
\node[red node] (3) [above right=0.5cm and 0.5cm of 2] {};
\node[red node] (4) [above right=0.5cm and 0.5cm of 3] {};
\node[red node] (5) [above right=0.5cm and 0.5cm of 4] {};

\node[red node] (00) [above left=0.5cm and 0.5cm of 3] {};
\node[red node] (10) [above right=0.5cm and 0.5cm of 00] {};
\node[red node] (20) [above right=0.5cm and 0.5cm of 10] {};

\node[red node] (01) [above left=0.5cm and 0.5cm of 10] {};
\node[red node] (02) [above right=0.5cm and 0.5cm of 01] {};
\node[red node] (03) [above right=0.5cm and 0.5cm of 02] {};

\node[red node] (11) [above left=0.5cm and 0.5cm of 01] {};
\node[red node] (12) [above right=0.5cm and 0.5cm of 11] {};
\node[red node] (13) [above right=0.5cm and 0.5cm of 12] {};
\node[red node] (14) [above right=0.5cm and 0.5cm of 13] {};
\node[red node] (15) [above right=0.5cm and 0.5cm of 14] {};

\node[minu node] (21) [above left=0.5cm and 0.5cm of 11] {};
\node[main node] (22) [above right=0.5cm and 0.5cm of 21] {};
\node[main node] (23) [above right=0.5cm and 0.5cm of 22] {};
\node[main node] (24) [above right=0.5cm and 0.5cm of 23] {};
\node[main node] (25) [above right=0.5cm and 0.5cm of 24] {};

\node[main node] (31) [above left=0.5cm and 0.5cm of 24] {};
\node[main node] (32) [above right=0.5cm and 0.5cm of 31] {};
\node[main node] (33) [above right=0.5cm and 0.5cm of 32] {};
\node[main node] (34) [above right=0.5cm and 0.5cm of 33] {};
\node[minu node] (35) [above right=0.5cm and 0.5cm of 34] {};

\node [right=0.05cm of 0] {$6$};
\node [red,right=0.05cm of 1] {$5$};
\node [red,right=0.05cm of 2] {$4$};
\node [red,right=0.05cm of 3] {$3$};
\node [red,left=0.05cm of 00] {$7$};
\node [red,right=0.05cm of 4] {$2$};
\node [red,left=0.05cm of 10] {$3$};
\node [red,right=0.05cm of 5] {$1$};
\node [red,left=0.05cm of 01] {$4$};
\node [red,right=0.05cm of 20] {$2$};
\node [red,left=0.05cm of 11] {$5$};
\node [red,right=0.05cm of 02] {$3$};
\node [left=0.05cm of 21] {$6$};
\node [red,left=0.05cm of 12] {$4$};
\node [red,right=0.05cm of 03] {$7$};
\node [left=0.05cm of 22] {$5$};
\node [red,right=0.05cm of 13] {$3$};
\node [left=0.05cm of 23] {$4$};
\node [red,right=0.05cm of 14] {$2$};
\node [left=0.05cm of 24] {$3$};
\node [red,right=0.05cm of 15] {$1$};
\node [left=0.05cm of 31] {$7$};
\node [right=0.05cm of 25] {$2$};
\node [left=0.05cm of 32] {$3$};
\node [left=0.05cm of 33] {$4$};
\node [left=0.05cm of 34] {$5$};
\node [left=0.05cm of 35] {$6$};

\path[draw,red]
(1)--(2)--(3)--(4)--(5)
(00)--(10)--(20)
(01)--(02)--(03)--(13)
(11)--(12)--(13)--(14)--(15)
(3)--(00)
(4)--(10)--(01)--(11)
(5)--(20)--(02)--(12);

\path[draw]
(0)--(1)
(11)--(21)
(12)--(22)
(21)--(22)--(23)--(24)--(25)
(31)--(32)--(33)--(34)--(35)
(13)--(23)
(14)--(24)--(31)
(15)--(25)--(32);
\end{tikzpicture}
\caption{The heap of the minuscule representation $L(E_7,6)$.}
\label{fig:e7branching}
\end{figure}
\end{example}

The main theorem of this section asserts that the poset isomorphisms observed at
the end of Example \ref{eg:e7branching} in fact hold for all minuscule
representations. The assertion is remarkable in the sense that it shows that on
the level of the minuscule posets the branching rules of minuscule
representations can be described purely in terms of the partial order $\le_k$,
even though the branching rules described via heaps depend heavily on the
labelling of the minuscule posets by the vertices of the Dynkin diagram. In
light of the last sentence of Remark \ref{rmk:automorphism}, this means that the
order $\le_k$ provides an efficient algorithm for computing the branching rule
of minuscule representations up to diagram automorphisms of Lie algebras that
does not require the use of heaps.

\begin{theorem}\label{thm:repthy}
Let $L=\lxp$ be a minuscule representation of a simple Lie algebra $\fg$ and let
$P=\pxp$ be the corresponding minuscule poset, so that the poset of weights of $L$
is isomorphic to $J(P)$. Let $\fk$ be the subalgebra of $\fg$ generated by the
set $\{e_i,f_i,h_i: 1\le i\le n, i\neq p\}$.  Then the $\fk$-module $\res$
decomposes into a direct sum \[ \res \cong \bigoplus_{i=0}^k V_i,\] where
$k=\width(P)$ and $V_i$ is a minuscule representation of $\fk$ whose weight
poset is isomorphic to $\cala_i(P)$ for each integer $0\le i\le k$.
\end{theorem}

\begin{proof}
For the unique minuscule representation $L=L(E_7,\omega_7)$ in type $E_7$, the
minuscule poset $P$ is isomorphic to $J^3([2]\times [3])$ by Table
\ref{tab:minu} (or Figure \ref{fig:e7branching}), and the theorem follows from
the discussions in Example \ref{eg:e7branching}.  For the other minuscule
representations listed in Table \ref{tab:minu}, the theorem can be proved
similarly case by case, by working out the direct summands of $\res$ via heaps
using \cite[Proposition 8.2.9 (iv)]{green13} and then verifying the facts that
$k=\width(P)$ and $P_i\cong \cala_i(P)$ for all $0\le i\le k$.  The direct sum
decompositions have also been described in \cite[Section 8.2]{green13}, so we
will simply sketch the key facts for each minuscule representation below.  We
denote the Dynkin type of $\fk$ by $Y$ in each case.

If $X=E_6$ and $p=5$, then we have $Y=D_5$ and $P=J^2([2]\times [3])$, where $P$
has width $2$ by Figure \ref{fig:minuscule} (d). Inspecting the heaps
corresponding to $L$ given in Figure \ref{fig:heaps}, we see that the restriction
$\res$ decomposes into the direct sum of the trivial module, the module
$L(D_5,\omega_1)$ (affording the natural representation with dimension 10), and
the module $L(D_5,\omega_{4})$ (affording a half spin representation with
dimension 16); see also \cite[Exercise 8.2.17]{green13}. The minuscule posets
corresponding to these summands are the empty poset, $J([2]\times[3])$, and
$J^2([2]\times [2])$, so that the weight poset of the summands are the singleton
poset, $J^2([2]\times[3])$, and $J^3([2]\times[2])$, respectively. The
conclusions of the theorem now follow from Theorem \ref{thm:dist} (iv), and a
similar argument shows that they also hold if $X=E_6$ and $p=1$.

If $X=D_n$ and $p\in \{n-1,n\}$, then we have $Y=A_{n-1}$ and
$P=J([n-2]\times[2])$, where $P$ has width $\lfloor n/2\rfloor$ by Figure
\ref{fig:minuscule} (b). In this case it is known that $L$ is one of the two
half spin representations of type $D_n$, and that (see \cite[Exercise
8.2.15]{green13}) we have an isomorphism of $\fk$-modules 
\[ \res \cong
    \bigoplus_{i = 0}^{\lfloor n/2 \rfloor} L(A_{n-1},\omega_{2i}).
\]
The minuscule poset corresponding to each summand $L(A_{n-1},\omega_{2i})$ is
$[n-2i]\times [2i]$. The weight posets of the summands are thus the posets
$J([n-2i]\times[2i])$ for $0\le i\le 2k$. 
On the other hand,
by
Proposition \ref{prop:Dspin}
and Proposition \ref{prop:jab_iso} we have  $$ \cala_i(P) \cong \calc(n, 2i) \cong
J([n-2i] \times [2i]) $$
for each $i$, so the conclusions of the theorem also hold in this case.

If $X=D_n$ and $p=1$, then we have $Y=D_{n-1}$ and $P=J^{n-3}([2]\times[2])$,
which has width $2$ by Figure \ref{fig:minuscule} (c). In
this case, it is known that $L_p$ is the natural representation of dimension
$2n$ in type $D_n$, and  that (see \cite[Exercise 8.2.16 (iii)]{green13}) $\res$
decomposes as the direct sum of three simple $\fk$-modules: two copies of the
trivial representation, and one copy of the natural representation of $\fk$. The
conclusions of the theorem now follow from Theorem \ref{thm:dist} (iii).

Finally, if $X=A_n$ and $p\in [n]$, then we have $P=[p]\times[n+1-p]$, where $P$
has width $k=\min(p,n+1-p)$ by Figure \ref{fig:minuscule} (a). In this case, we
have $Y=A_{p-1}\times A_{n-p}$, so that $\fk$ is a simple Lie algebra of type
$A_{n-1}$ if $p\in \{1,n\}$ and is the direct sum of two simple Lie algebras, one
of type $A_{p-1}$ and one of type $A_{n-p}$, otherwise.  As illustrated by
Figure \ref{fig:a},  when the Hasse diagram of the heap is drawn as in Figure
\ref{fig:minuscule} (a), the heap $H=H(A_n,\omega_p)$ has its leftmost element
labelled by 1, and the labels of the elements increase every time one moves
northeast or southeast by one step in the grid; see also \cite[Section
6.2]{green13}. The heap contains $k$ elements labelled by $p$, with the maximal
such element $\pi_k$ being the top element of $H$ and  the other
$k-1$ elements $\pi_{k-1},\cdots,\pi_1$ being the elements directly below
$\pi_k$ in the Hasse diagram.  It follows that as $i$ ranges over
the list
$k,k-1,\cdots,1,0$, the subheaps $H'_i$ of $H(A_n,\omega_p)$ corresponding to
the summands of $\res$ satisfy natural poset isomorphisms $ J(H'_i)\cong
\cald_{k-i}([p]\times[n+1-p])=\cald_{k-i}(P)$ for all $i$, where
$\cald_i([p]\times[n+1-p])$ is as
defined in Definition \ref{def:Durfee}. The assertions of the theorem now follow
from Theorem \ref{thm:dist} (i), completing the proof.
\begin{figure}[h!] \centering
    \begin{tikzpicture} \node[main node] (0) {}; \node[main node] (1) [above
          left=0.4cm and 0.4cm of 0] {}; \node[main node] (2) [above left=0.4cm
          and 0.4cm of 1] {}; \node[main node] (3) [above left=0.4cm and 0.4cm
          of 2] {};

\node[below=0.05cm of 0] {\tiny{$4$}};
\node[below=0.05cm of 1] {\tiny$3$};
\node[below=0.05cm of 2] {\tiny$2$};
\node[below=0.05cm of 3] {\tiny$1$};

\node[minu node] (4) [above right=0.4cm and 0.4cm of 0] {};
\node[main node] (5) [above right=0.4cm and 0.4cm of 4] {};
\node[main node] (6) [above right=0.4cm and 0.4cm of 5] {};
\node[main node] (7) [above right=0.4cm and 0.4cm of 6] {};

\node[below=0.05cm of 4] {\tiny$5$};
\node[below=0.05cm of 5] {\tiny$6$};
\node[below=0.05cm of 6] {\tiny$7$};
\node[below=0.05cm of 7] {\tiny$8$};

\node[main node] (40) [above right=0.4cm and 0.4cm of 1] {};
\node[minu node] (50) [above right=0.4cm and 0.4cm of 40] {};
\node[main node] (60) [above right=0.4cm and 0.4cm of 50] {};
\node[main node] (70) [above right=0.4cm and 0.4cm of 60] {};

\node[below=0.05cm of 40] {\tiny$4$};
\node[below=0.05cm of 50] {\tiny$5$};
\node[below=0.05cm of 60] {\tiny$6$};
\node[below=0.05cm of 70] {\tiny$7$};

\node[main node] (400) [above right=0.4cm and 0.4cm of 2] {};
\node[main node] (500) [above right=0.4cm and 0.4cm of 400] {};
\node[minu node] (600) [above right=0.4cm and 0.4cm of 500] {};
\node[main node] (700) [above right=0.4cm and 0.4cm of 600] {};

\node[below=0.05cm of 400] {\tiny$3$};
\node[below=0.05cm of 500] {\tiny$4$};
\node[below=0.05cm of 600] {\tiny$5$};
\node[below=0.05cm of 700] {\tiny$6$};

\node[main node] (4000) [above right=0.4cm and 0.4cm of 3] {};
\node[main node] (5000) [above right=0.4cm and 0.4cm of 4000] {};
\node[main node] (6000) [above right=0.4cm and 0.4cm of 5000] {};
\node[minu node] (7000) [above right=0.4cm and 0.4cm of 6000] {};

\node[below=0.05cm of 4000] {\tiny$2$};
\node[below=0.05cm of 5000] {\tiny$3$};
\node[below=0.05cm of 6000] {\tiny$4$};
\node[below=0.05cm of 7000] {\tiny$5$};

\path[draw]
(0)--(1)
(2)--(3)
(3)--(4000)--(5000)
(2)--(400)--(500)
(1)--(40)--(50)
(0)--(4)--(5)
(6)--(7)
(60)--(70)
(600)--(700)
(6000)--(7000)
(4)--(40)
(5)--(50)
(6)--(60)
(7)--(70)
(400)--(4000)
(500)--(5000)
(600)--(6000)
(700)--(7000);

\path[draw]
(1)--(2)
(40)--(400)
(50)--(500)
(60)--(600)
(70)--(700)
(5)--(6)
(50)--(60)
(500)--(600)
(5000)--(6000);
\end{tikzpicture}
\caption{The heap of $L(A_8,\omega_5)$.}
\label{fig:a}
\end{figure}
\end{proof}

%\begin{remark}
%\label{rmk:repthy}
%The uniformly described direct sum decomposition of $L_p\!\!\downarrow_\fk$ of
%\cite[Proposition 8.2.9 (iv)]{green13}, depends heavily on a labelling of the
%poset $P$ by vertices of the Dynkin diagram. It is remarkable that the
%description of Theorem \ref{thm:repthy} relies only on the structure of the
%underlying poset $P$.
%\end{remark}

\section{Concluding remarks}
\label{sec:conclude}

We discuss a few open problems concerning the partial order $\le_k$ in this
section. First, it would be interesting to have a conceptual, case-free proof
of Theorem \ref{thm:repthy}.  Secondly, apart from the minuscule poset setting,
it may be interesting to study the structure of posets of the form
$\cala_k(\Phi^+)$ where $\Phi^+$ is a root poset, i.e., the poset of positive
roots of a Weyl group $W$. Thirdly, if $W$ has rank $r$, then the so-called
Narayana numbers $\abs{\cala_k(\Phi^+)}$ are symmetric in the sense that
$\abs{\cala_k(\Phi^+)}=\abs{\cala_{r-k}(\Phi^+)}$ for any $0\le k\le r$ (see
\cite{DefantHopkins}), so it would also be interesting to know whether this
symmetry can be realized by a poset isomorphism (with respect to the orders
$\le_k$ and $\le_{r-k}$) between $\cala_k(\Phi^+)$ and $\cala_{r-k}(\Phi^+)$.

Bijections realizing the symmetry
$\abs{\cala_k(\Phi^+)}=\abs{\cala_{r-k}(\Phi^+)}$ have been studied before.  In
\cite[Conjecture 6.1]{Panyushev}, Panyushev conjectures that for all root
systems $\Phi$, there is a natural involution $*$ on $\cala(\Phi^+)$ satisfying
a certain list of properties, one of which is that it should restrict to
bijections between $\cala_k(\Phi^+)$ and $\cala_{n-1-k}(\Phi^+)$ for all $0\le
k\le r$. Panyushev also constructs (\cite[Section 4]{Panyushev}) such an
involution for the root poset $\Phi^+=\{\varepsilon_i-\varepsilon_j: 1\le i<j\le
n\}\subseteq \R^n$ of type $A_{n-1}\, (n\ge 2)$ as follows: write
$[i,j]=\varepsilon_i-\varepsilon_j$ for all $1\le i<j\le n$, and for each antichain
$A=\{[i_1,j_1],\cdots, [i_k,j_k]\}$ in $\cala_k(\Phi^+)$, define $A^*$ to the
unique antichain in $\cala_{n-1-k}$ consisting of elements $[i'_1,j'_1], \cdots,
[i'_{n-1-k}, j'_{n-1-k}]$ where \[ \{i'_1,\cdots,
i'_{n-1-k}\}=\{1,2,\cdots,n-1\}\setminus\{j_1-1,\cdots, j_k-1\},\]
\[\{j'_1,\cdots, j'_{n-1-k}\}=\{2,3,\cdots,n\}\setminus\{i_1+1,\cdots,i_k+1\} \]
as sets and \[ i'_1<\cdots< i'_{n-1-k},\quad j'_1<\cdots< j'_{n-1-k}.  \] Using
Proposition \ref{prop:basics} (ii) and the fact that $[i,j]\lessdot [l,m]$ in
$\Phi^+$ if and only if either $i=l, m=j+1$ or $l=i-1, j=m$, it is
straightforward to verify that if $A'\lessdot A$ for some antichain $A'\in
\cala_k(\Phi^+)$, then $A'^*\lessdot A^*$ in $\cala_{n-1-k}(\Phi^+)$. Since the
map $*$  is an involution, we deduce the following: 
\begin{proposition} 
    \label{prop:panyu} For the root poset $\Phi^+$ of type $A_{n-1}$,
      Panyushev's natural involution $*$ on the set $\cala(\Phi^+)$ restricts to
      poset isomorphisms between $\cala_k(\Phi^+)$ and $\cala_{n-1-k}(\Phi^+)$
      for all $1\le k\le n-1$.\qed
\end{proposition} 

More generally, Defant and Hopkins prove in \cite{DefantHopkins} that for root
systems of types $A, B, C$ and $D$, a so-called {rowvacuation} operator
satisfies Panyushev's desired properties and recovers the map $*$ in type $A$.
However, we note that while rowvacuation provides bijections between
$\cala_k(\Phi^+)$ and $\cala_{r-k}(\Phi^+)$, it does not give a poset
isomorphism between these posets in types $B,C$ and $D$. To see this, recall
that $\Phi^+$ is a ranked poset. Let $R$ be the rank of  $\Phi^+$ and let
$\Phi^+_i$ be the antichain in $\Phi^+$ consisting of all elements of rank $i$
for each $0\le i\le R$.  Then in types $B,C$ and $D$, both $\Phi^+_R$ and
$\Phi^+_{R-1}$ are singletons satisfying $\Phi^+_{R-1}\lessdot \Phi^{+}_{R}$ in
$\cala_1(\Phi^+)$. On the other hand, Proposition 2.9 of \cite{DefantHopkins}
implies that rowvacuation sends $\Phi^+_{R-1}$ and $\Phi^+_{R}$ to $\Phi^+_{2}$
and $\Phi^+_1$, respectively, yet $\Phi^+_2$ and $\Phi^+_1$ are elements of
$\cala_{r-1}(\Phi^+)$ that are not in a covering relation.

%%%%%%%%%%%%%%%%%%%%%%%%%%%%%%%%%%%%%%%%%%%%%%%%%%%
%%%%%%%%%%%%%%%%%%%%%%%%%%%%%%%%%%%%%%%%%%%%%%%%%%%

% ACKNOWLEDGEMENTS
% Include acknowledgements to colleagues and referee here.
% Funding and grant support should appear in footnotes on the front page, using the 
% thanks command in the authors command (see above).

\section*{Acknowledgements}
We thank Darij Grinberg and Hugh Thomas for helpful discussions. We also thank
the anonymous referees for reading our paper carefully and suggesting many
improvements.

\bibliographystyle{alphaurl}
\bibliography{gx4.bib}

\end{document}